\documentclass[11pt]{article}

\usepackage{amsmath,amsthm,amsfonts,amssymb,amscd}
\usepackage[english]{babel}
\usepackage[a4paper]{geometry}
\usepackage{graphicx}
\usepackage{float}
\usepackage{xcolor}
\usepackage[hidelinks]{hyperref}
\usepackage[T1]{fontenc}
\usepackage{lmodern}
\usepackage{mathtools}
\usepackage[hidelinks]{hyperref}

\newcommand{\ee}{\varepsilon}
\newcommand{\pa}{\partial}

\newtheorem{theorem}{Theorem}

\newtheorem{remark}{Remark}

\renewcommand{\tilde}{\widetilde}

\usepackage{enumerate}

\usepackage{authblk}

\begin{document}
	
	\title{New unexpected limit operators for homogenizing optimal control
		parabolic problems with dynamic reaction flow on the boundary of\\
		critically scaled particles}
	\date{}

	\author[1]{Jes\'us Ildefonso D\'iaz\thanks{Corresponding Author: \url{jidiaz@ucm.es}}}
	
	\affil[1]{%
		Instituto de Matem\'{a}tica Interdisciplinar, 
		Universidad Complutense de
		Madrid. \protect\\
		Plaza de Ciencias 3, 28040 (Madrid) Spain.}
	
	\author[2]{Alexander V. Podolskiy\thanks{\url{originalea@ya.ru}}}
	\affil[2]{%
		Faculty of Mechanics and Mathematics, 
		Moscow State University.
		Moscow 19992, Russia.}
	
	\author[2]{Tatiana A. Shaposhnikova\thanks{\url{shaposh.tan@mail.ru}}}

\maketitle

\begin{abstract}
We study the asymptotic behavior, as $\varepsilon \rightarrow 0,$ of the
optimal control and the optimal state of an initial boundary value problem
in a domain that is $\varepsilon $-periodically perforated by balls (or,
equivalently it is the complementary to a set of spherical particles). On
the boundary of the perforations (or of the particles) we assume a dynamic
condition with a large growth coefficient in the time derivative. The
control region is a possible small subregion and the cost functional
includes a balance between the prize of the controls and the error with
respect to a given target profile. We consider the so-called
\textquotedblleft critical case\textquotedblright\ concerning a certain
relation between the structure's period, the diameter of the balls, and the
growth coefficient of the boundary condition.  We show that the homogenized
problem contains in the limit state equation a nonlocal "strange term",
given as a solution to a suitable ordinary differential equation. We prove
the weak convergence of the state and the optimal control to the state and
the optimal control associated with the limit cost functional \ which now
contains an unexpected new \textquotedblleft strange\textquotedblright\ term.
\end{abstract}

\noindent\textbf{\textit{Keywords}} {Homogenization, Critical case, optimal control, <<Strange>> term, Dynamic boundary condition, homogenized cost functional.}\\
\noindent\textbf{\textit{Subject Classification}} 35B27, 35K20, 49K20, 93C20.

\section{Introduction}
It is well-known that important problems of Chemical Engineering lead to the optimization of some cost functionals (see, e.g. \cite{Upreti book} and
the survey \cite{Nolasco-Survey}). Here, we will consider the optimal control
problem associated with the distributed case in which the chemical reactor
consists of a fixed bed and a dynamic reaction flow on the boundary
of the particles. The problem also arises in other frameworks linked to
porous media in which the word \textquotedblleft particle\textquotedblright\
must be replaced by \textquotedblleft perforation\textquotedblright\ (see,
e.g. \cite{Hurlov}, \cite{Hornung-Yaeger}, \cite{ConcaDliTi}, \cite{OlSh95}, 
\cite{GoSanchezP}, \cite{Iliev-Mikelik}, \cite{Timof}, \cite{Anguiano} and
the many other references quoted in the monograph \cite{DiGoShBook}). The
state equation of our control problem is the parabolic problem with dynamic
boundary conditions%
\begin{equation}
\left\{ 
\begin{array}{lr}
\partial _{t}u_{\varepsilon }(v)-\Delta u_{\varepsilon }(v)=f+\chi _{\omega
_{\varepsilon }}v, & (x,t)\in Q_{\varepsilon }^{T}, \\ 
\varepsilon ^{-\gamma }\partial _{t}u_{\varepsilon }(v)+\partial _{\nu
}u_{\varepsilon }(v)=0, & (x,t)\in S_{\varepsilon }^{T}, \\ 
u_{\varepsilon }(v)(x,0)=0, & x\in \Omega _{\varepsilon }, \\ 
u_{\varepsilon }(v)(x,0)=0, & x\in S_{\varepsilon }, \\ 
u_{\varepsilon }(v)(x,t)=0, & (x,t)\in \Gamma ^{T},%
\end{array}%
\right.  \label{init state prob}
\end{equation}%
where we are using the notation (considering $0 < T < \infty$)%
\begin{equation}\label{def: basic sets}
\begin{array}{cccc}
\Omega_{\varepsilon }=\Omega \setminus \overline{G_{\varepsilon }}, & 
S_{\varepsilon }=\partial G_{\varepsilon }, & \partial \Omega _{\varepsilon
}=S_{\varepsilon }\cup \partial \Omega , & \\ 
Q_{\varepsilon }^{T}=\Omega _{\varepsilon }\times (0,T), & \Gamma
^{T}=\partial \Omega \times (0,T), & S_{\varepsilon }^{T}=S_{\varepsilon
}\times (0,T), & Q^{T}=\Omega \times (0,T),%
\end{array}%
\end{equation}%
which will be detailed in the next section, $G_{\varepsilon }$ is  the set of small particles ( $\varepsilon $-periodically
distributed and homothetic to a ball) in an open bounded regular set $\Omega 
$ of $\mathbb{R}^{n}$, $n\geq 3$, $f\in L^2(Q^T)$, and the control $v\in L^{2}(\omega_{\varepsilon }^{T})$ is acting on a possibly small open part $\omega_{\varepsilon }$ of $\Omega_{\varepsilon }$ (here, $\chi_{\omega_{\varepsilon }}$ denotes the characteristic function of $\omega_{\varepsilon }$): i.e. we introduce an open domain $\omega$ such that $\overline\omega \subset
\Omega $, and then we consider the sets 
\begin{equation*}
\omega _{\varepsilon }=\omega \cap \Omega _{\varepsilon },\quad \omega
_{\varepsilon }^{T}=\omega _{\varepsilon }\times (0,T),\quad \omega
^{T}=\omega \times (0,T).
\end{equation*}%
\ The parameter $\gamma >0$ plays crucial role since in this paper
we will consider the so-called \textquotedblleft critical
case\textquotedblright\ in which each particle is a translation of a small
particle $a_{\varepsilon }G_{0},$ where $G_{0}$ is the unit ball and $%
a_{\varepsilon }=C_{0}\varepsilon ^{\gamma }$, with $\gamma =\frac{n}{n-2}$
and $C_{0}$ some positive constant.

Notice, since problem (\ref{init state prob}) is a linear problem, we can
assume without loss of generality that the initial data are zero. We also
assume $f\in L^{2}(Q^{T})$ and the control regularity $v\in L^{2}(\omega
_{\varepsilon }^{T})$. The existence of a unique weak solution $%
u_{\varepsilon }(v)\in L^{2}(0,T;H^{1}(\Omega _{\varepsilon }, \pa\Omega))$ with $%
\partial _{t}u_{\varepsilon }(v)\in L^{2}(0,T;L^{2}(\Omega _{\varepsilon }))$, $\pa_t u_\ee(v) \in L^2(0, T; L^2(S_\ee))$ 
can be obtained by the Galerkin approximations (see~\cite{ZuShDynDok19}). The application of the
abstract theory for subdifferential of convex functions also leads to some
existence results, see Remark \ref{rm Abstract theory}. 

The formulation of the optimal problem ends with the definition of the cost
functional $J_{\varepsilon }:L^{2}(\omega_{\varepsilon }^{T})\rightarrow 
\mathbb{R}$. We assume to be given a \textquotedblleft
target\textquotedblright\ function, a given profile observed at the final time $T$, $u_{T}\in H^{1}(0,T;H_{0}^{1}(\Omega)) \bigcap C(\overline{Q^T})$, and we try to optimize a
weighted functional making the balance between the \textquotedblleft
prize\textquotedblright\ of each control $v\in L^{2}(\omega _{\varepsilon
}^{T})$ and the \textquotedblleft error with respect to the target
function\textquotedblright\ at the final time $T$ (over $\Omega
_{\varepsilon }$ and over $S_{\varepsilon }$) but in the same time trying to have a
spatial gradient of the state $u_{\varepsilon }(v)(x,t)$ close to the one of
the target profile $u_{T}$ when $t\in (0,T)$ (notice that we know that the
trace of $u_{T}$ on $S_{\varepsilon }$ is well defined), i.e. we consider the
cost functional%
\begin{equation}
\begin{gathered}
J_{\varepsilon }(v)=\frac{1}{2}\int\limits_{Q_{\varepsilon }^{T}}|\nabla
u_{\varepsilon }(v)-\nabla u_{T}|^{2}{dxdt}+\frac{1}{2}\int\limits_{\Omega
_{\varepsilon }}(u_{\varepsilon }(v)-u_{T})^{2}(x,T){dx} \\ 
+\frac{{\varepsilon }^{-\gamma }}{2}\int\limits_{S_{\varepsilon
}}(u_{\varepsilon }(v)-u_{T})^{2}(x,T){ds}+\frac{N}{2}\int\limits_{\omega
_{\varepsilon }^{T}}v^{2}{dxdt}.%
\end{gathered}
\label{cost functional}
\end{equation}
Some comments on the application of this optimal control problem to the
study of the approximate controllability of solutions of (\ref{init state
prob}) will be given later (see Remark \ref{Rm Controlability probl init}).
By applying different results in the literature, see \cite{LionsOpt}, \cite{Fursikov2000OptimalCO}, \cite{Trolstz}, \cite{Glow-Lions}, it is well known
that there exists a unique optimal control $v_{\varepsilon }\in L^{2}(\omega
_{\varepsilon }^{T})$ such that 
\begin{equation}
J_{\varepsilon }(v_{\varepsilon })=\inf\limits_{v\in L^{2}(\omega
_{\varepsilon }^{T})}J_{\varepsilon }(v).  \label{optim control def}
\end{equation}

The main goal of this paper is to apply a homogenization process to the
above optimal control problem when $\varepsilon \rightarrow 0$. As in many
other formulations, the kind of homogenized limit problem depends,
strongly, on the size of the particles radii $C_{0}\varepsilon ^{\alpha }$, $%
C_{0}>0$ (see, e.g. \cite{Timof}, \cite{Anguiano} and the exposition made in 
\cite{DiGoShBook}). Here, we will consider the critical case in which $\alpha
=\gamma ={n}/{(n-2).}$ For some different elliptic and parabolic problems, it
is well-known that this critical choice leads to the emergence of a new
\textquotedblleft strange\textquotedblright\ term (the naming is due to~\cite%
{Cioranescu1997AST}) in the effective partial differential equation (see~%
\cite{Hurlov}, \cite{Cioranescu1997AST}, \cite{Kaizu1}, \cite{ZuShDiffEq},
and the monograph \cite{DiGoShBook}).

In the framework of elliptic equations with a dynamic boundary condition on
the boundary of the particles, it was shown that the above-mentioned
\textquotedblleft strange\textquotedblright\ term \ becomes a
\textquotedblleft non-local\textquotedblright\ operator obtained by solving
a suitable ordinary differential equation (we refer to \cite{ShDyn19,
TimeDepNon20}). In this paper, we will show that a new unexpected term
appears in the limit cost functional (in contrast with previous results in
the literature for related formulations, e.g. see~\cite{SJPZ02,
PoShControl20, Shaposhnikova2022HomogenizationOT, DiPoShBoundCont22}).
Although, the detailed statements of our results will be presented later, we
summarize now that\ we will prove the convergence of the optimal
controls $v_{\varepsilon }$ $\rightarrow v_{0}$ strongly in $L^{2}(\omega
^{T})$, the convergence of the corresponding states (extended to $\Omega $) 
$\tilde{u}_{\varepsilon }\rightharpoonup u_{0}$ weakly in $%
L^{2}(0,T;H_{0}^{1}(\Omega, \pa\Omega))$ and $\pa_t\tilde{u}_{\varepsilon}\rightharpoonup \pa_t u_{0}$ weakly in $L^{2}(Q^T)$, with the limit state problem given by 
\begin{equation}\label{limit u prob}
\left\{ 
\begin{array}{lr}
\partial _{t}u_{0}-\Delta u_{0}+\mathcal{A}_{n}(u_{0}-\mathcal{B}%
_{n}H(u_{0})(x,t))=f + \chi_{\omega }v_{0}, & (x,t)\in Q^{T}, \\ 
u_{0}(x,0)=0, & x\in \Omega , \\ 
u_{0}(x,t)=0, & (x,t)\in \Gamma ^{T},%
\end{array}%
\right.
\end{equation}%
for suitable constants $\mathcal{A}_{n},\mathcal{B}_{n}$ and a suitable
non-local in time operator $H(u_0)$, and that $v_{0}$ is the optimal
control associated to the limit cost functional $J_{0}(v)$ (i.e. 
$J_0(v_0) = \inf\{J_0(v) \vert v\in L^2(\omega^T)\}$
)
that is defined by
\begin{equation}\label{limit cost functional}
\begin{gathered}
J_{0}(v)=\frac{1}{2}\Vert \nabla (u_{0}(v)-u_{T})\Vert _{L^{2}(Q^{T})}^{2}+%
\frac{1}{2}\Vert u_{0}(v)(\cdot,T)-u_{T}(\cdot,T)\Vert _{L^{2}(\Omega )}^{2}+\frac{N}{2}%
\Vert v\Vert _{L^{2}(\omega ^{T})}^{2}+ \\ 
+\frac{C^{n-1}\omega _{n}}{2}\Vert (u_{T}(\cdot,T)-\mathcal{B}%
_{n}H(u_0(v))(\cdot,T))\Vert _{L^{2}(\Omega )}^{2}+\frac{\mathcal{A}_{n}}{2}%
\Vert \partial _{t}H(u_0(v))\Vert _{L^{2}(Q^{T})}^{2}.%
\end{gathered}
\end{equation}%
A new consequence of the critical size of the particles is the presence of
the last two terms of $J_{0}(v)$: this is not expected if we compare what
arises in this critical case with similar results obtained for
non-critical cases. Some comments on the application of this optimal control
limit problem to the study of the approximate controllability of solutions
to the problem \eqref{limit u prob} will be given later (see Remark~\ref{Rm Controlability Limit problem}).

In order to get the proof of these convergence results, we will use the
extension of Pontryagin's method to the case of distributed problems
(following the main ideas introduced in \cite{LionsOpt}). The delicate question is to get a priori estimates under the presence of suitable
balances appearing in this critical case and to identify the limit of
several auxiliary expressions. In Section 2, we give the details of the
formulation of the direct problem as well as the coupled system arising in
terms of the adjoint optimal state $p_{\varepsilon }$, we will show that the
optimal control is given by $v_{\varepsilon }=-N^{-1}p_{\varepsilon }\chi
_{\omega _{\varepsilon }}.$ The a priori estimates allow to pass to the
limit in the couple $(u_{\varepsilon },p_{\varepsilon })$ (and thus in the
controls $v_{\varepsilon }$) is presented in Section 3. A detailed
formulation of the main theorems of this paper is collected in Section 4.
For the proofs, we start by characterizing the limit couple $(u_{0},p_{0})$
of the couple $(u_{\varepsilon },p_{\varepsilon })$, which is presented in
Section 5, and finally, we prove the identification of the limit cost functional $J_{0}(v)$ in Section 6.
 
\section{Problem formulation and coupled system with the adjoint state}
Let $\Omega$ be a bounded domain in $\mathbb{R}^n$, $n\ge 3$ with a smooth boundary $\pa\Omega$ (the case
of $n=2$ can be considered but requires a different approach: e.g. see
Section 4.7.2 of \cite{DiGoShBook} and its references). Let $Y=(-1/2, 1/2)^n$ -- a unit cube centered at the coordinate's origin, and $G_0 =\{x \vert |x| < 1 \}$. Define $\delta B = \{ x \vert \delta^{-1}x \in B \}$ for $\delta > 0$. We set, for $\ee > 0$,
\begin{equation*}
    \widetilde{\Omega}_\ee = \{ x\in\Omega \,\vert\, {\rm dist}(x, \pa\Omega) > 2\ee\}.
\end{equation*}

By $\mathbb{Z}^n$, we denote the set of all vectors $z = (z_1,\dots, z_n)$ with integer coordinates $z_i$, $i\in\overline{1, n}$. Next, we define the set
\begin{equation*}
    G_\ee = \bigcup\limits_{j\in\Upsilon_\ee}(a_\ee G_0 + \ee j) = \bigcup\limits_{j\in \Upsilon_\ee}G^j_\ee,
\end{equation*}
where $\Upsilon_\ee = \{ j\in \mathbb{Z}^n \,\vert\, \overline{G^j_\ee} \subset Y^j_\ee = \ee Y + \ee j, G^j_\ee \bigcap \widetilde{\Omega}_\ee \neq \emptyset \}$. We consider $a_\ee = C_0 \ee^{\gamma}$, $\gamma = \frac{n}{n - 2}$, $C_0$ is a positive constant. Also, note that $|\Upsilon_\ee| = d \ee^{-n}$ for some constant $d$. Next, the set $G_\ee$ is used to define the sets in \eqref{def: basic sets}.

We say that the function $u_\ee \in L^2(0, T; H^1(\Omega_\ee, \pa\Omega))$ with $\pa_t u_\ee \in L^2(0, T; L^2(\Omega_\ee))$ and  $\pa_t u_\ee \in L^2(0, T; L^2(S_\ee))$ is a weak solution to the initial boundary value problem \eqref{init state prob} if $u(x, 0) = 0$ for a.e. $x\in\Omega_\ee$ and a.e. $x\in S_\ee$, and it satisfies the integral identity
\begin{equation}\label{int ident u}
    \begin{gathered}
        \int\limits_{Q^T_\ee}\pa_t u_\ee \varphi{dxdt} + \int\limits_{Q^T_\ee}\nabla u_\ee\nabla \varphi{dxdt} + \ee^{-\gamma}\int\limits_{S^T_\ee}\pa_t u_\ee \varphi{dsdt} 
        = 
        \int\limits_{Q^T_\ee}(f + \chi_{\omega_\ee}v)\varphi{dxdt}
    \end{gathered}
\end{equation}
for an arbitrary function $\varphi \in L^2(0, T; H^1(\Omega_\ee, \pa\Omega))$.



For a given control function $v \in L^{2}(\omega _{\varepsilon }^{T})$, we will denote by $u_{\varepsilon }(v)$ a unique weak solution to the initial boundary-value problem (\ref{init
state prob}).
The existence and uniqueness of $%
u_{\varepsilon }(v)$ was shown in ~\cite{ZuShDynDok19} by means of the Galerkin
method. In addition, the following a priori estimate is valid
\begin{gather*}
\Vert \partial _{t}u_{\varepsilon }(v)\Vert _{L^{2}(Q_{\varepsilon
}^{T})}^{2}+\varepsilon ^{-\gamma }\Vert \partial _{t}u_{\varepsilon
}(v)\Vert _{L^{2}(S_{\varepsilon }^{T})}^{2}+\max\limits_{t\in \lbrack
0,T]}\Vert \nabla u_{\varepsilon }(v)(\cdot ,t)\Vert _{L^{2}(\Omega
_{\varepsilon })}^{2} \leq \\
\leq K(\Vert f\Vert _{L^{2}(Q^{T})}^{2}+\Vert v\Vert _{L^{2}(\omega
_{\varepsilon }^{T})}^{2}),
\end{gather*}
here and below, we suppose that $K$ is independent of $\varepsilon$.

\begin{remark}
\label{rm Abstract theory}The existence and uniqueness of solutions to the
problem (\ref{init state prob}) can be obtained via the abstract theory of
subdifferential of convex functions. Indeed, by an easy adaptation of the
proof of Theorem 1.3 of \cite{BejeDVrabie}, we know that the vectorial
operator associated to this system is a self-adjoint operator $A_{H}$ on the
Hilbert space $H=L^{2}(\Omega _{\varepsilon })\times L^{2}(S_{\varepsilon })$
and thus, by Proposition 2.15 of \cite{Brezis}, it is the subdifferential of
a convex function defined through the square root of $A_{H}.$ Finally, to
take into account the coefficient $\varepsilon ^{-\gamma }$ in the dynamic
boundary condition, we apply Propositions 3.2 and 3.3 of \cite{Showalter}
with $\mathcal{B}$ the matrix%
\begin{equation*}
\mathcal{B=}\left( 
\begin{array}{cc}
I & 0 \\ 
0 & \varepsilon ^{-\gamma }I%
\end{array}%
\right) .
\end{equation*}
\end{remark}

As mentioned in the Introduction, along with the problem (\ref{init state prob}%
), we consider the cost functional $J_{\varepsilon }:L^{2}(\omega
_{\varepsilon }^{T})\rightarrow \mathbb{R}$ that is defined in (\ref{cost
functional}) for a given $u_{T}\in H^{1}(0,T;H_{0}^{1}(\Omega)) \bigcap C(\overline{Q^T})$. By
well-known results \cite{LionsOpt}, \cite{Fursikov2000OptimalCO}, \cite%
{Trolstz}, \cite{Glow-Lions}, there exists a unique optimal control $%
v_{\varepsilon }\in L^{2}(\omega _{\varepsilon }^{T})$ such that $%
J_{\varepsilon }(v_{\varepsilon })=\inf\limits_{v\in L^{2}(\omega
_{\varepsilon }^{T})}J_{\varepsilon }(v).$

We define the adjoin problem associated to the given state problem \eqref{init state prob} and the cost functional \eqref{cost functional}
\begin{equation}\label{init adjoint prob}
\left\{
    \begin{array}{lr}
        \pa_t p_\ee + \Delta p_\ee = \Delta (u_\ee - u_T), & (x,t)\in Q^T_\ee,\\
        \pa_\nu p_\ee -\ee^{-\gamma}\pa_t p_\ee = \pa_\nu (u_\ee - u_T), & (x, t)\in S^T_\ee,\\
        p_\ee(x, T) = (u_\ee - u_T)(x, T), & x\in \Omega_\ee,\\
        p_\ee(x, T) = (u_\ee - u_T)(x, T), & x\in S_\ee,\\
        p_\ee(x, t) = 0, & (x, t) \in \Gamma^T,
    \end{array}
\right.
\end{equation}
We say that the function $%
p_{\varepsilon }\in L^{2}(0,T; H^1(\Omega_\ee, \pa\Omega))$ 
with $\partial
_{t}p_{\varepsilon }\in L^{2}(0,T;L^{2}(\Omega _{\varepsilon }))$ and $%
\partial _{t}p_{\varepsilon }\in L^{2}(0,T;L^{2}(S_{\varepsilon }))$ is a weak solution to the problem \eqref{init adjoint prob} if $p_\ee(x , T) = (u_\ee - u_T)(x, T)$ for a.e. $x\in \Omega_\ee$ and a.e. $x\in S_\ee$ and if it satisfies the integral identity
\begin{equation}\label{int ident init adjoint prob}
    -\int\limits_{Q^T_\ee}\pa_t p_\ee \varphi{dxdt} + \int\limits_{Q^T_\ee}\nabla p_\ee \nabla\varphi{dxdt} - \ee^{-\gamma}\int\limits_{S^T_\ee}\pa_t p_\ee \varphi {dsdt} = \int\limits_{Q^T_\ee}\nabla (u_\ee - u_T)\nabla\varphi{dxdt},
\end{equation}
for any test function $\varphi\in L^2(0, T; H^1(\Omega_\ee, \pa\Omega))$. The following results are a version of the \textit{Pontryagin's
principle} for this optimality system.

\begin{theorem}\label{thm: init opt cont}
    Let the pair of function $(u_\ee(v_\ee), v_\ee)$ be an optimal solution of the problem \eqref{optim control def}, then $v_\ee = -N^{-1}p_\ee\chi_{\omega_\ee}$, where $p_\ee$ is the solution to \eqref{init adjoint prob}. The converse is also true.
\end{theorem}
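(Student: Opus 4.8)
The plan is to use that, because the state depends affinely on the control and $N>0$, the functional $J_\ee$ is strictly convex and Gâteaux differentiable, so that a control is optimal if and only if $J_\ee'(v_\ee)=0$; establishing this single first-order identity then yields both implications of the theorem at once. First I would use the linearity of \eqref{init state prob} to write $u_\ee(v_\ee+sh)=u_\ee(v_\ee)+s\,z_\ee$ for $s\in\mathbb{R}$ and $h\in L^2(\omega_\ee^T)$, where $z_\ee=z_\ee(h)$ is the weak solution of the state problem with $f\equiv 0$, control $h$ and homogeneous initial data. Differentiating \eqref{cost functional} at $s=0$ and writing $u_\ee=u_\ee(v_\ee)$ gives
\begin{gather*}
J_\ee'(v_\ee)(h)=\int_{Q_\ee^T}\nabla(u_\ee-u_T)\cdot\nabla z_\ee\,dxdt+\int_{\Omega_\ee}(u_\ee-u_T)(\cdot,T)\,z_\ee(\cdot,T)\,dx\\
+\ee^{-\gamma}\int_{S_\ee}(u_\ee-u_T)(\cdot,T)\,z_\ee(\cdot,T)\,ds+N\int_{\omega_\ee^T}v_\ee\,h\,dxdt.
\end{gather*}

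The core of the argument is to rewrite the first three terms as an integral over $\omega_\ee^T$ through the adjoint state. I would insert $\varphi=z_\ee$ into the adjoint identity \eqref{int ident init adjoint prob}, whose right-hand side is then exactly the first term of $J_\ee'(v_\ee)(h)$, and integrate by parts in time in $-\int_{Q_\ee^T}\pa_t p_\ee\,z_\ee$ and $-\ee^{-\gamma}\int_{S_\ee^T}\pa_t p_\ee\,z_\ee$. The boundary contributions at $t=0$ vanish since $z_\ee(\cdot,0)=0$, while those at $t=T$ leave $-\int_{\Omega_\ee}p_\ee(\cdot,T)z_\ee(\cdot,T)\,dx$ and $-\ee^{-\gamma}\int_{S_\ee}p_\ee(\cdot,T)z_\ee(\cdot,T)\,ds$. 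Testing the state identity \eqref{int ident u} written for $z_\ee$ against $\psi=p_\ee$ yields
\[
\int_{Q_\ee^T}p_\ee\,\pa_t z_\ee\,dxdt+\ee^{-\gamma}\int_{S_\ee^T}p_\ee\,\pa_t z_\ee\,dsdt=\int_{\omega_\ee^T}p_\ee\,h\,dxdt-\int_{Q_\ee^T}\nabla p_\ee\cdot\nabla z_\ee\,dxdt,
\]
and substituting this into the previous computation makes the symmetric Dirichlet terms $\int_{Q_\ee^T}\nabla p_\ee\cdot\nabla z_\ee$ cancel.

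At this stage the terminal conditions $p_\ee(\cdot,T)=(u_\ee-u_T)(\cdot,T)$, valid a.e. on both $\Omega_\ee$ and $S_\ee$, convert the two surviving time-boundary terms into precisely the second and third terms of $J_\ee'(v_\ee)(h)$. Combining them with the right-hand side of the adjoint identity gives
\begin{gather*}
\int_{Q_\ee^T}\nabla(u_\ee-u_T)\cdot\nabla z_\ee\,dxdt+\int_{\Omega_\ee}(u_\ee-u_T)(\cdot,T)z_\ee(\cdot,T)\,dx\\
+\ee^{-\gamma}\int_{S_\ee}(u_\ee-u_T)(\cdot,T)z_\ee(\cdot,T)\,ds=\int_{\omega_\ee^T}p_\ee\,h\,dxdt,
\end{gather*}
so that $J_\ee'(v_\ee)(h)=\int_{\omega_\ee^T}(p_\ee+Nv_\ee)\,h\,dxdt$ for every $h\in L^2(\omega_\ee^T)$. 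Its vanishing for all $h$ is equivalent to $p_\ee+Nv_\ee=0$ on $\omega_\ee^T$, i.e. $v_\ee=-N^{-1}p_\ee\chi_{\omega_\ee}$. By strict convexity this same identity characterizes the unique minimizer, which delivers both the direct statement and its converse.

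I expect the only genuinely delicate point to be the justification of the integration by parts in time in the presence of the dynamic boundary term carrying the singular weight $\ee^{-\gamma}$: one must check that $t\mapsto\int_{\Omega_\ee}p_\ee z_\ee\,dx$ and $t\mapsto\int_{S_\ee}p_\ee z_\ee\,ds$ are absolutely continuous and that their traces at $t=0,T$ are well defined. This follows from the regularity recorded above, namely $\pa_t u_\ee,\pa_t p_\ee\in L^2(0,T;L^2(\Omega_\ee))$, their boundary analogues in $L^2(0,T;L^2(S_\ee))$, and $u_\ee,p_\ee\in L^2(0,T;H^1(\Omega_\ee,\pa\Omega))$. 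Everything else is careful bookkeeping of the two integral identities, the symmetry of the Dirichlet form, and the matching of the terminal data.
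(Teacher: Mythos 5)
Your proof is correct and takes essentially the same route as the paper: compute the G\^ateaux derivative of $J_\ee$ along an affine perturbation of the control, use duality between the linearized state (your $z_\ee$, the paper's $\theta_\ee$) and the adjoint state $p_\ee$ to rewrite the derivative as $\int_{\omega_\ee^T}(p_\ee+Nv_\ee)h\,dxdt$, and conclude from its vanishing for all $h$. In fact you spell out two points the paper compresses: the duality computation itself (testing \eqref{int ident init adjoint prob} with $z_\ee$, integrating by parts in time, cancelling the symmetric Dirichlet forms, and invoking the terminal conditions on both $\Omega_\ee$ and $S_\ee$), which the paper dispatches with the single phrase ``using the definition of the function $p_\ee$,'' and the converse implication, which you obtain cleanly from strict convexity while the paper merely asserts it.
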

\begin{proof}
    Let $v$ be an arbitrary function from $L^2(\omega^T_\ee)$ and $\lambda > 0$. By $u^\lambda_\ee$, we denote the solution of \eqref{optim control def} with the control $v_\ee + \lambda v$, i.e. $u^\lambda_\ee = u_\ee(v_\ee + \lambda v)$. We use $u_\ee = u_\ee(v_\ee)$ to simplify the notation.

    Indeed, we have
    \begin{gather*}
        J_\ee(v_\ee + \lambda v) - J_\ee(v_\ee) = \frac{1}{2}\Vert\nabla (u^\lambda_\ee - u_T)\Vert^2_{L^2(Q^T_\ee)} + \frac{1}{2}\Vert (u^\lambda_\ee - u_T)(x, T)\Vert^2_{L^2(\Omega_\ee)} +\\
        + \frac{\ee^{-\gamma}}{2}\Vert (u^\lambda_\ee - u_T)(x, T) \Vert^2_{L^2(S_\ee)} 
        + \frac{N}{2}\Vert v_\ee + \lambda v\Vert^2_{L^2(\omega^T_\ee)}-\\
        - \frac{1}{2}\Vert\nabla (u_\ee - u_T)\Vert^2_{L^2(Q^T_\ee)} - \frac{1}{2}\Vert (u_\ee - u_T)(x, T)\Vert^2_{L^2(\Omega_\ee)} -\\
        - \frac{\ee^{-\gamma}}{2}\Vert (u_\ee - u_T)(x, T) \Vert^2_{L^2(S_\ee)} - \frac{N}{2}\Vert v_\ee\Vert^2_{L^2(\omega^T_\ee)} = \\
        = \frac{1}{2}\int\limits_{Q^T_\ee}\nabla(u^\lambda_\ee - u_\ee)\nabla(u^\lambda_\ee + u_\ee - 2 u_T) {dxdt} + \frac{1}{2}\int\limits_{\Omega_\ee}(u^\lambda_\ee - u_\ee)(u^\lambda_\ee + u_\ee - 2 u_T)(x, T){dx} + \\
        + \frac{\ee^{-\gamma}}{2}\int\limits_{S_\ee}(u^\lambda_\ee - u_\ee)(u^\lambda_\ee + u_\ee - 2 u_T)(x, T){ds} + \frac{N}{2}\int\limits_{\omega^T_\ee}(2\lambda v_\ee v + \lambda^2 v^2){dxdt}
    \end{gather*}

Next, we define the function $\theta^\lambda_\ee = u^\lambda_\ee - u_\ee$. It is easy to see that this function is a solution to the problem
\begin{equation*}
\left\{
    \begin{array}{lr}
        \pa_t \theta^\lambda_\ee - \Delta \theta^\lambda_\ee = \lambda v \chi_{w_\ee}, & (x,t)\in Q^T_\ee,\\
        \ee^{-\gamma}\pa_t \theta^\lambda_\ee + \pa_\nu \theta^\lambda_\ee = 0, & (x, t)\in S^T_\ee,\\
        \theta^\lambda_\ee(x, 0) = 0, & x\in \Omega_\ee,\\
        \theta^\lambda_\ee(x, 0) = 0, & x\in S_\ee,\\
        \theta^\lambda_\ee(x, t) = 0, & (x, t) \in \Gamma^T,
    \end{array}
\right.
\end{equation*}
The solution to this problem satisfies the estimate
\begin{gather*}
    \Vert \pa_t \theta^\lambda_\ee \Vert_{L^2(Q^T_\ee)} + \ee^{-\gamma}\Vert \pa_t \theta^\lambda_\ee \Vert_{L^2(S^T_\ee)} +\\
    + \max\limits_{t\in [0, T]}\Vert \nabla\theta^\lambda_\ee(\cdot, t) \Vert_{L^2(\Omega_\ee)} + \ee^{-\gamma}\max\limits_{t\in[0, T]}\Vert \theta^\lambda_\ee(\cdot, t)\Vert_{L^2(S_\ee)}  \le K|\lambda| \Vert v\Vert_{L^2(\omega^T_\ee)}.
\end{gather*}
Thus, we have the following convergences as $\lambda\to 0$
\begin{gather*}
    u^\lambda_\ee \to u_\ee \mbox{ in } L^2(0, T; H^1(\Omega_\ee, \pa\Omega)), \quad\pa_t u^\lambda_\ee \to \pa_t u_\ee \mbox{ in } L^2(0, T; L^2(\Omega_\ee)),\\
    \pa_t u^\lambda_\ee \to \pa_t u_\ee \mbox{ in } L^2(0, T; L^2(S_\ee)),\quad
    u^\lambda_\ee(\cdot, T) \to u_\ee(\cdot, T) \mbox{ in } L^2(\Omega_\ee),\\
    u^\lambda_\ee(\cdot, T) \to u_\ee(\cdot, T) \mbox{ in } L^2(S_\ee).
\end{gather*}
We define the function $\theta_\ee = \theta^\lambda_\ee / \lambda$ that is a solution to the following problem
\begin{equation*}
\left\{
    \begin{array}{lr}
        \pa_t \theta_\ee - \Delta \theta_\ee = v\chi_{\omega_\ee}, & (x,t)\in Q^T_\ee,\\
        \ee^{-\gamma}\pa_t \theta_\ee + \pa_\nu \theta_\ee = 0, & (x, t)\in S^T_\ee,\\
        \theta_\ee(x, 0) = 0, & x\in \Omega_\ee,\\
        \theta_\ee(x, 0) = 0, & x\in S_\ee,\\
        \theta_\ee(x, t) = 0, & (x, t) \in \Gamma^T.
    \end{array}
\right.
\end{equation*}
Now, as usual, we divide $J_\ee(v_\ee + \lambda v) - J_\ee(v_\ee)$ by $\lambda$ and pass to the limit as $\lambda\to 0$. Thus, we get
\begin{gather*}
    J'_\ee(v_\ee)v = \lim\limits_{\lambda\to 0}(J_\ee(v_\ee + \lambda v) - J_\ee(v_\ee))/\lambda =\\
    = \int\limits_{Q^T_\ee}\nabla \theta_\ee \nabla (u_\ee - u_T) {dxdt} + \int\limits_{\Omega_\ee}\theta_\ee(x, T) (u_\ee - u_T)(x, T){dx} + \\
    +\ee^{-\gamma}\int\limits_{S_\ee}\theta_\ee(x, T) (u_\ee - u_T)(x, T){ds} + N\int\limits_{\omega^T_\ee}v_\ee v{dxdt}
\end{gather*}
Using the definition of the function $p_\ee$, we transform the last expression into
\begin{gather*}
    J'_\ee(v_\ee)v = \int\limits_{\omega^T_\ee}p_\ee v{dxdt} + N\int\limits_{\omega^T_\ee}v_\ee v{dxdt}.
\end{gather*}
The function $v_\ee$ is the optimal control, hence, $J'_\ee(v_\ee)v = 0$ for an arbitrary function $v\in L^2(\omega^T_\ee)$. This implies that $v_\ee = -N^{-1}p_\ee\chi_{\omega_\ee}$.
    
\end{proof}

According to the above theorem, the optimality conditions for $%
(u_{\varepsilon },p_{\varepsilon })$ lead to the study of the coupled system
\begin{equation}\label{init coupled prob}
\left\{
    \begin{array}{lr}
        \pa_t u_\ee - \Delta u_\ee = f - N^{-1}p_\ee\chi_{\omega_\ee}, & (x, t)\in Q^T_\ee,\\
        \pa_t p_\ee + \Delta p_\ee = \Delta (u_\ee - u_T), & (x,t)\in Q^T_\ee,\\
        \pa_\nu u_\ee + \ee^{-\gamma}\pa_t u_\ee = 0, & (x, t)\in S^T_\ee,\\
        \pa_\nu p_\ee - \ee^{-\gamma}\pa_t p_\ee = \pa_\nu (u_\ee - u_T), & (x, t)\in S^T_\ee,\\
        u_\ee(x, 0) = 0, & x\in \Omega_\ee,\\
        u_\ee(x, 0) = 0, & x\in S_\ee,\\
        p_\ee(x, T) = (u_\ee - u_T)(x, T), & x\in \Omega_\ee,\\
        p_\ee(x, T) = (u_\ee - u_T)(x, T), & x\in S_\ee,\\
        u_\ee(x, t) = p_\ee(x, t) = 0, & (x, t) \in \Gamma^T.
    \end{array}
\right.
\end{equation}

\begin{remark}
\label{Rm Controlability probl init} By following some arguments introduced
by J.-L. Lions in \cite{Lions Malaga} (see also, e.g., Section 1.6 in the
book \cite{Glow-Lions}), it is possible to use the study of a small
adaptation of the above optimal control problem to prove the
\textquotedblleft approximate controllability property\textquotedblright\
for solutions of problem (\ref{init state prob}), provided we know a result
on \textquotedblleft unique continuation\textquotedblright\ for this
problem. Given $u_{T}\in H^{1}(0,T;H_{0}^{1}(\Omega ))\bigcap C(\overline{Q^T})$ and an arbitrarily
small $\delta >0,$ the \textquotedblleft approximate controllability
property\textquotedblright\ consists in finding a control $v_{\delta }\in
L^{2}(\omega _{\varepsilon }^{T})$ such that $\left\Vert u_{\varepsilon
}(\cdot,T)-u_{T}(\cdot,T)\right\Vert _{L^{2}(\Omega _{\varepsilon })}\leq \delta $
and $\left\Vert u_{\varepsilon }(\cdot,T)-u_{T}(\cdot,T)\right\Vert
_{L^{2}(S_{\varepsilon })}\leq \delta $. By using some a priori estimates on
the adjoint state $p_{\varepsilon }(x,t)$, it can be shown that if we
introduce a new parameter $\kappa >0$ in the cost functional%
\begin{equation*}
\begin{gathered}
J_{\varepsilon }^{\kappa }(v)=\frac{\kappa }{2}\int\limits_{Q_{\varepsilon
}^{T}}|\nabla u_{\varepsilon }(v)-\nabla u_{T}|^{2}{dxdt}+\frac{\kappa }{2}%
\int\limits_{\Omega _{\varepsilon }}(u_{\varepsilon }(v)-u_{T})^{2}(x,T){dx}
\\ 
+\frac{\kappa {\varepsilon }^{-\gamma }}{2}\int\limits_{S_{\varepsilon
}}(u_{\varepsilon }(v)-u_{T})^{2}(x,T){ds}+\frac{N}{2}\int\limits_{\omega
_{\varepsilon }^{T}}v^{2}{dxdt},%
\end{gathered}%
\end{equation*}%
then a such searched control $v_{\delta }$ can be found by considering the
set of optimal controls $v_{\kappa }\in L^{2}(\omega _{\varepsilon }^{T})$
associated to $J_{\varepsilon }^{\kappa }(v)$ and by taking $v_{\delta
}=v_{\kappa }$ for $\kappa $ large enough. As a matter of facts, the
presence of the gradient difference term (the first term) in $J_{\varepsilon
}^{\kappa }(v)$ leads to conclude that the associated state will also
satisfy the additional property $\left\Vert u_{\varepsilon
}-u_{T}\right\Vert _{L^{2}(Q_{\varepsilon }^{T})}\leq \delta $.
\end{remark}

\section{A priori estimates on $u_{\protect\varepsilon }$ and $p_{\protect\varepsilon }$}

Taking $p_\ee$ as a test function in the integral identity for $u_\ee$, we get
\begin{equation}\label{sec3: int ident u p}
    \begin{gathered}
        \int\limits_{Q^T_\ee}\pa_t u_\ee p_\ee {dxdt} + \ee^{-\gamma}\int\limits_{S^T_\ee}\pa_t u_\ee p_\ee{dsdt} + \int\limits_{Q^T_\ee}\nabla u_\ee \nabla p_\ee{dxdt} 
        = \int\limits_{Q^T_\ee}f p_\ee{dxdt} - \frac{1}{N}\int\limits_{\omega^T_\ee}p^2_\ee{dxdt}.
    \end{gathered}
\end{equation}
Now, taking $u_\ee$ as a test function in the integral identity for $p_\ee$, we get
\begin{equation}\label{sec3: int ident p u}
    \begin{gathered}
        -\int\limits_{Q^T_\ee}\pa_t p_\ee u_\ee {dxdt} - \ee^{-\gamma}\int\limits_{S^T_\ee}\pa_t p_\ee u_\ee{dsdt} + \int\limits_{Q^T_\ee}\nabla p_\ee \nabla u_\ee{dxdt} 
        = \int\limits_{Q^T_\ee}|\nabla u_\ee|^2{dxdt} - \int\limits_{Q^T_\ee}\nabla u_\ee\nabla u_T{dxdt}.
    \end{gathered}
\end{equation}
Next, we subtract \eqref{sec3: int ident u p} from \eqref{sec3: int ident p u} and obtain the expression
\begin{equation*}
    \begin{gathered}
        -\int\limits_{Q^T_\ee}\pa_t(u_\ee p_\ee){dxdt} - \ee^{-\gamma}\int\limits_{S^T_\ee}\pa_t(u_\ee p_\ee){dsdt} = \\
        = \int\limits_{Q^T_\ee}|\nabla u_\ee|^2{dxdt} - \int\limits_{Q^T_\ee}f p_\ee{dxdt} - \int\limits_{Q^T_\ee}\nabla u_\ee \nabla u_T{dxdt} + \frac{1}{N}\int\limits_{\omega^T_\ee}p_\ee^2{dxdt}.
    \end{gathered}
\end{equation*}
Transforming this equality, we get
\begin{gather*}
    \Vert\nabla u_\ee\Vert^2_{L^2(Q^T_\ee)} + \Vert u_\ee(\cdot, T)\Vert^2_{L^2(\Omega_\ee)} + \ee^{-\gamma}\Vert u_\ee(\cdot, T)\Vert^2_{L^2(S_\ee)} + \frac{1}{N}\Vert p_\ee\Vert^2_{L^2(\omega^T_\ee)} = \\
    = \int\limits_{Q^T_\ee}f p_\ee {dxdt} + \int\limits_{Q^T_\ee}\nabla u_\ee \nabla u_T{dxdt} + \int\limits_{\Omega_\ee}u_\ee(x, T)u_T(x, T){dx} + \ee^{-\gamma}\int\limits_{S_\ee}u_\ee(x, T)u_T(x, T){ds}.
\end{gather*}
From here, we immediately derive
\begin{gather*}
    \Vert\nabla u_\ee\Vert^2_{L^2(Q^T_\ee)} + \Vert u_\ee(\cdot, T)\Vert^2_{L^2(\Omega_\ee)} + \ee^{-\gamma}\Vert u_\ee(\cdot, T)\Vert^2_{L^2(S_\ee)} + \Vert p_\ee\Vert^2_{L^2(\omega^T_\ee)} \le \\ 
    \le K(\Vert f\Vert_{L^2(Q^T)}\Vert p_\ee\Vert_{L^2(Q^T_\ee)} + \Vert\nabla u_T\Vert^2_{L^2(Q^T_\ee)} + \max\limits_{x\in\overline\Omega}|u_T(x, T)|^2).
\end{gather*}
Then, we take $p_\ee$ as a test function in the integral identity for $p_\ee$ and get
\begin{gather*}
    \Vert \nabla p_\ee \Vert^2_{L^2(Q^T_\ee)} \le K(\Vert\nabla (u_\ee - u_T)\Vert^2_{L^2(Q^T_\ee)} + \Vert (u_\ee - u_T)(\cdot, T)\Vert^2_{L^2(\Omega_\ee)} +\\
    +\ee^{-\gamma}\Vert (u_\ee - u_T)(\cdot, T)\Vert^2_{L^2(S_\ee)} )
    \le K(\Vert f\Vert_{L^2(Q^T)}\Vert p_\ee\Vert_{L^2(Q^T_\ee)} + \Vert\nabla u_T\Vert^2_{L^2(Q^T_\ee)} + \max\limits_{x\in\overline\Omega}|u_T(x, T)|^2).
\end{gather*}
Now, for the function from $H^1(\Omega_\ee, \pa\Omega)$, we have Friedrichs's inequality
\begin{equation*}
    \Vert p_\ee\Vert_{L^2(\Omega_\ee)} \le K\Vert \nabla p_\ee\Vert_{L^2(\Omega_\ee)}.
\end{equation*}
Applying it to the previous inequality, we derive
\begin{equation*}
    \Vert p_\ee\Vert^2_{L^2(Q^T_\ee)} \le K (\Vert f\Vert_{L^2(Q^T)}\Vert p_\ee\Vert_{L^2(Q^T_\ee)} + \Vert\nabla u_T\Vert^2_{L^2(Q^T_\ee)} + \max\limits_{x\in\overline\Omega}|u(x, T)|^2).
\end{equation*}
Hence, we obtain the estimation
\begin{equation*}
    \Vert p_\ee \Vert^2_{L^2(Q^T_\ee)} \le K (\Vert f\Vert^2_{L^2(Q^T)} + \Vert\nabla u_T\Vert^2_{L^2(Q^T_\ee)} + \max\limits_{x\in\overline\Omega}|u_T(x, T)|^2).
\end{equation*}
Then, we immediately derive the estimations
\begin{equation}\label{u p estimations}
\begin{gathered}
    \Vert\nabla u_\ee\Vert^2_{L^2(Q^T_\ee)} + \Vert u_\ee(\cdot, T)\Vert^2_{L^2(\Omega_\ee)} + \ee^{-\gamma}\Vert u_\ee(\cdot, T)\Vert^2_{L^2(S_\ee)} + \Vert p_\ee\Vert^2_{L^2(Q^T_\ee)} \le\\
    \le K(\Vert f\Vert^2_{L^2(Q^T)} + \Vert \nabla u_T\Vert^2_{L^2(Q^T)} + \max\limits_{x\in\overline\Omega}|u_T(x, T)|^2),\\
    \Vert \nabla p_\ee \Vert^2_{L^2(Q^T_\ee)} \le K(\Vert f\Vert^2_{L^2(Q^T)} + \Vert\nabla u_T\Vert^2_{L^2(Q^T)} + \max\limits_{x\in\overline\Omega}|u_T(x, T)|^2).
\end{gathered}
\end{equation}

Next, we get estimations for the time derivatives of $u_\ee$ and $p_\ee$. Following the work \cite{ZuShDynDok19}, we construct Galerkin's approximations of $u_\ee$ and $p_\ee$ which we denote by $u^m_\ee$ and $p^m_\ee$ respectively, where $m=1,2, \dots$. Note that for $u^m_\ee$ and $p^m_\ee$, we have the same estimates as in \eqref{u p estimations} for $u_\ee$ and $p_\ee$.

Taking $\pa_t u^m_\ee$ as a test function in the equation for $u^m_\ee$ and integrating from 0 to an arbitrary $\theta\in [0, T]$, we have
\begin{gather*}
    \Vert\pa_t u^m_\ee\Vert^2_{L^2(Q^T_\ee)} + \ee^{-\gamma}\Vert\pa_t u^m_\ee\Vert^2_{L^2(S^T_\ee)} + \max\limits_{t\in[0, T]}\Vert\nabla u^m_\ee\Vert^2_{L^2(\Omega_\ee)} \le\\ \le K \int\limits_{Q^T_\ee}(|f| + \frac{1}{N}|p^m_\ee|\chi_{\omega_\ee})|\pa_t u^m_\ee|{dxdt}
    \le \frac{1}{2}\Vert \pa_t u^m_\ee\Vert^2_{L^2(Q^T_\ee)} + K(\Vert f\Vert^2_{L^2(Q^T)} + \Vert p^m_\ee\Vert^2_{L^2(w^T_\ee)}),
\end{gather*}
where constant $K$ is independent of $\ee$ and $m$. From here, we immediately derive
\begin{equation*}
    \Vert\pa_t u^m_\ee\Vert^2_{L^2(Q^T_\ee)} + \ee^{-\gamma}\Vert \pa_t u^m_\ee\Vert^2_{L^2(S^T_\ee)} 
    \le K(\Vert f\Vert^2_{L^2(Q^T)} + \Vert \nabla u_T\Vert^2_{L^2(Q^T)} + \max\limits_{x\in\overline\Omega}|u_T(x, T)|^2).
\end{equation*}
Passing to the limit as $m\to\infty$, we get the estimation
\begin{equation*}
    \Vert\pa_t u_\ee\Vert^2_{L^2(Q^T_\ee)} + \ee^{-\gamma}\Vert \pa_t u_\ee\Vert^2_{L^2(S^T_\ee)} 
    \le K(\Vert f\Vert^2_{L^2(Q^T)} + \Vert \nabla u_T\Vert^2_{L^2(Q^T)} + \max\limits_{x\in\overline\Omega}|u_T(x, T)|^2).
\end{equation*}

Again using Galerkin's approximation, we have for a.e. $t\in (0, T)$
\begin{gather*}
    -\Vert\pa_t p^m_\ee\Vert^2_{L^2(\Omega_\ee)} - \ee^{-\gamma}\Vert\pa_t p^m_\ee\Vert^2_{L^2(S_\ee)} + (\nabla p^m_\ee, \pa_t \nabla p^m_\ee)_{L^2(\Omega_\ee)}=\\
    = -(\pa_t u^m_\ee, \pa_t p^m_\ee)_{L^2(\Omega_\ee)} - \ee^{-\gamma}(\pa_t u^m_\ee, \pa_t p^m_\ee)_{L^2(S_\ee)} +\\
    + ((f - N^{-1}p^m_\ee\chi_{\omega_\ee}), \pa_t p^m_\ee)_{L^2(\Omega_\ee)} - (\nabla u_T, \nabla\pa_t p^m_\ee)_{L^2(\Omega_\ee)}.
\end{gather*}

Integrating this expression with respect to $t$ from 0 to $T$, 
we get
\begin{gather*}
    -\Vert \pa_t p^m_\ee\Vert^2_{L^2(Q^T_\ee)} + \frac{1}{2}\Vert\nabla p^m_\ee(\cdot, T)\Vert^2_{L^2(\Omega_\ee)} - \frac{1}{2}\Vert\nabla p^m_\ee(\cdot, 0)\Vert^2_{L^2(\Omega_\ee)} - \ee^{-\gamma}\Vert \pa_t p^m_\ee\Vert^2_{L^2(S^T_\ee)} = \\
    = -\int\limits_{Q^T_\ee}\pa_t u^m_\ee \pa_t p^m_\ee{dxdt} - \ee^{-\gamma}\int\limits_{S^T_\ee}\pa_t u^m_\ee \pa_t p^m_\ee {dsdt} + \\
    + \int\limits_{Q^T_\ee}(f - N^{-1}p^m_\ee\chi_{\omega_\ee})\pa_t p^m_\ee{dxdt} - \int\limits_{Q^T_\ee}\nabla u_T \nabla\pa_t p^m_\ee{dxdt} =\\
    = -\int\limits_{Q^T_\ee}\pa_t u^m_\ee \pa_t p^m_\ee{dxdt} - \ee^{-\gamma}\int\limits_{S^T_\ee}\pa_t u^m_\ee \pa_t p^m_\ee {dsdt} + \\
    + \int\limits_{Q^T_\ee}(f - N^{-1}p^m_\ee\chi_{\omega_\ee})\pa_t p^m_\ee{dxdt} +   \int\limits_{Q^T_\ee}\nabla \pa_t u_T \nabla p^m_\ee{dxdt} - \\
    - \int\limits_{\Omega_\ee}\nabla u_T(x, T)\nabla p^m_\ee(x, T){dx} + \int\limits_{\Omega_\ee}\nabla u_T(x, 0)\nabla p^m_\ee(x, 0){dx}.
\end{gather*}
From here, we derive the estimate
\begin{gather*}
    \Vert \pa_t p^m_\ee\Vert^2_{L^2(Q^T_\ee)} + \ee^{-\gamma}\Vert \pa_t p^m_\ee\Vert^2_{L^2(S^T_\ee)} + \frac{1}{2}\Vert \nabla p^m_\ee(\cdot, 0)\Vert^2_{L^2(\Omega_\ee)} + \frac{1}{2N}\Vert p^m_\ee(x, 0)\Vert^2_{L^2(\omega_\ee)} \le\\
    \le\frac{1}{2}\Vert\nabla (u^m_\ee - u_T)(\cdot, T)\Vert^2_{L^2(\Omega_\ee)} + \Vert \pa_t u^m_\ee\Vert_{L^2(Q^T_\ee)} \Vert\pa_t p^m_\ee\Vert_{L^2(Q^T_\ee)} + \\
    + \ee^{-\gamma}\Vert \pa_t u^m_\ee\Vert_{L^2(S^T_\ee)} \Vert\pa_t p^m_\ee\Vert_{L^2(S^T_\ee)} + \Vert f\Vert_{L^2(Q^T_\ee)}\Vert\pa_t p^m_\ee\Vert_{L^2(Q^T_\ee)} + \\
    + \frac{1}{2N}\Vert (u^m_\ee - u_T)(\cdot, T)\Vert^2_{L^2(\omega_\ee)}
    + \Vert\nabla\pa_t u_T\Vert_{L^2(Q^T_\ee)}\Vert\nabla p^m_\ee\Vert_{L^2(Q^T_\ee)}+
    \\ + \Vert\nabla u_T(\cdot, T)\Vert_{L^2(\Omega_\ee)}\Vert \nabla (u^m_\ee - u_T)(\cdot, T)\Vert_{L^2(\Omega_\ee)} + \Vert\nabla u_T(\cdot, 0)\Vert_{L^2(\Omega_\ee)}\Vert \nabla p^m_\ee(\cdot, 0)\Vert_{L^2(\Omega_\ee)}.
\end{gather*}
Next, we get
\begin{gather*}
    \Vert \pa_t p^m_\ee\Vert^2_{L^2(Q^T_\ee)} + \ee^{-\gamma}\Vert \pa_t p^m_\ee\Vert^2_{L^2(S^T_\ee)} + \frac{1}{4}\Vert \nabla p^m_\ee(\cdot, 0)\Vert^2_{L^2(\Omega_\ee)} + \frac{1}{2N}\Vert p^m_\ee(x, 0)\Vert^2_{L^2(\omega_\ee)} \le \\ 
    \le \frac{1}{2}\Vert\nabla u^m_\ee(\cdot, T)\Vert^2_{L^2(\Omega_\ee)} + \frac{1}{4} \Vert\pa_t p^m_\ee\Vert^2_{L^2(Q^T_\ee)} + \frac{\ee^{-\gamma}}{4}\Vert\pa_t p^m_\ee\Vert^2_{L^2(S^T_\ee)} +\\
    + K(\Vert \pa_t u^m_\ee\Vert^2_{L^2(Q^T_\ee)} + \ee^{-\gamma}\Vert \pa_t u^m_\ee\Vert^2_{L^2(S^T_\ee)} + \Vert u^m_\ee(\cdot, T)\Vert^2_{L^2(\omega_\ee)} + \\
    + \Vert f\Vert^2_{L^2(Q^T)} + \Vert \nabla \pa_t u_T\Vert^2_{L^2(Q^T)} + \max\limits_{t\in[0,T]}\Vert\nabla u_T\Vert^2_{L^2(\Omega)} + \max\limits_{x\in\overline\Omega}|u_T(x, T)|^2).
\end{gather*}
Eventually, using the estimates for $u^m_\ee$, we obtain
\begin{equation}
\begin{gathered}
    \Vert \pa_t p^m_\ee\Vert^2_{L^2(Q^T_\ee)} + \ee^{-\gamma}\Vert \pa_t p^m_\ee\Vert^2_{L^2(S^T_\ee)}
    \le K(\Vert f\Vert^2_{L^2(Q^T)} + \Vert \nabla u_T\Vert^2_{L^2(Q^T)} + \\ + \Vert \nabla \pa_t u_T\Vert^2_{L^2(Q^T)} + \max\limits_{t\in[0,T]}\Vert\nabla u_T\Vert^2_{L^2(\Omega)} + \max\limits_{x\in\overline\Omega}|u_T(x, T)|^2),
\end{gathered}
\end{equation}
where constant $K$ is independent of $m$ and $\ee$. Passing to the limit as $m\to \infty$, we get the estimate for $\pa_t p_\ee$
\begin{equation}
\begin{gathered}
    \Vert \pa_t p_\ee\Vert^2_{L^2(Q^T_\ee)} + \ee^{-\gamma}\Vert \pa_t p_\ee\Vert^2_{L^2(S^T_\ee)}\le \\ 
    \le K(\Vert f\Vert^2_{L^2(Q^T)} + \Vert \nabla \pa_t u_T\Vert^2_{L^2(Q^T)} + \max\limits_{t\in[0,T]}\Vert\nabla u_T(\cdot, t)\Vert^2_{L^2(\Omega)} + \max\limits_{x\in\overline\Omega}|u_T(x, T)|^2).
\end{gathered}
\end{equation}
Thus, we got uniform in $\ee$ estimates for $u_\ee$ and $p_\ee$, and their time derivatives.

There exists the extension operator $P_\ee: H^1(Q^T_\ee) \to H^1(Q^T)$ (see. \cite{Cior99}, \cite{OlSh95}) such that 
\begin{equation*}
    \Vert P_\ee(u)\Vert_{H^1(Q^T)} \le \Vert u\Vert_{H^1(Q^T_\ee)}.
\end{equation*}
Let $\tilde u_\ee$, $\tilde p_\ee$ be the extensions of the functions $u_\ee$, $p_\ee$ then the following estimations are valid
\begin{gather}
    \Vert \pa_t \tilde u_\ee\Vert^2_{L^2(Q^T)} + \Vert \nabla \tilde u_\ee\Vert^2_{L^2(Q^T)} \le K(\Vert \pa_t  u_\ee\Vert^2_{L^2(Q^T_\ee)} + \Vert \nabla u_\ee\Vert^2_{L^2(Q^T_\ee)}),\label{ext u estim}\\
    \Vert \pa_t \tilde p_\ee\Vert^2_{L^2(Q^T)} + \Vert \nabla \tilde p_\ee\Vert^2_{L^2(Q^T)} \le K(\Vert \pa_t  p_\ee\Vert^2_{L^2(Q^T_\ee)} + \Vert \nabla p_\ee\Vert^2_{L^2(Q^T_\ee)}).\label{ext p estim}
\end{gather}

The estimations \eqref{ext u estim}, \eqref{ext p estim} imply that there exist subsequences for which we preserve the notation of the original, i.e. $\tilde u_\ee$ and $\tilde p_\ee$, such that, as $\ee\to0$, we have
\begin{equation}\label{limit func def}
    \begin{gathered}
        \tilde u_\ee \rightharpoonup u_0 \mbox{ weakly in } L^2(0, T; H^1_0(\Omega)),\\
        \tilde p_\ee \rightharpoonup p_0 \mbox{ weakly in } L^2(0, T; H^1_0(\Omega)),\\
        \pa_t \tilde u_\ee \rightharpoonup \pa_t u_0 \mbox{ weakly in } L^2(Q^T_\ee),\\
        \pa_t \tilde p_\ee \rightharpoonup \pa_t p_0 \mbox{ weakly in } L^2(Q^T_\ee).
    \end{gathered}
\end{equation}
Moreover, the embedding theorem implies that $\tilde u_\ee \to u_0$ and $\tilde p_\ee \to p_0$ in $L^2(Q^T)$. 

\section{Main results}
The following theorem characterizes the limit functions $u_0$ and $p_0$ defined in \eqref{limit func def}.

\begin{theorem}\label{thm: main theorem}
    Let $n\ge 3$, $a_\ee = C_0 \ee^{\gamma}$, where $C_0 > 0$, $\gamma = n / (n-2)$. If the pair $(u_\ee, p_\ee)$ is a solution to the problem \eqref{init coupled prob}, then, the pair $(u_0, p_0)$ is a solution to the system
    \begin{equation}\label{limit coupled prob}
    \left\{
        \begin{array}{lr}
            \pa_t u_0 - \Delta u_0 + \mathcal{A}_n(u_0 - \mathcal{B}_n H(u_0)) = f - N^{-1}\chi_{\omega}p_0, & (x, t)\in Q^T,\\
            -\pa_t p_0 - \Delta p_0 + \mathcal{A}_n(p_0 - \mathcal{B}_n H^*(p_0)) =\\ -\Delta (u_0 - u_T) + \mathcal{A}_n (u_0 - \mathcal{B}_n^2 H^*(H(u_0)) - e^{\mathcal{B}_n(t - T)}u_T(x, T)), & (x,t)\in Q^T,\\
            u_0(x, 0) = 0, & x\in \Omega,\\
            p_0(x, T) = (u_0 - u_T)(x, T), &  x\in \Omega,\\
            u_0(x, t) = p_0(x, t) = 0, & (x, t) \in \Gamma^T,
        \end{array}
    \right.
    \end{equation}
    where $Q^T = \Omega \times (0, T)$, $\mathcal{A}_n = (n-2) C^{n-2}_0 w_n$, $H(\varphi)(x, t)$ is given as a solution to
    \begin{equation}\label{H prob}
        \left\{
        \begin{array}{lr}
            \pa_t H(\varphi) + \mathcal{B}_n H(\varphi) = \varphi, & t\in (0, T),\\
            H(\varphi)(x, 0) = 0,
        \end{array}
        \right.
    \end{equation}
    where $\mathcal{B}_n = (n - 2)C^{-1}_0$, and $H^*(\varphi)(x, t)$ is a solution to the adjoint problem
    \begin{equation}\label{adj H prob}
        \left\{
            \begin{array}{lr}
            -\pa_t H^*(\varphi) + \mathcal{B}_n H^*(\varphi) = \varphi, & t\in (0, T),\\
            H^*(\varphi)(x, T) = 0.
        \end{array}
        \right.
    \end{equation}
    
\end{theorem}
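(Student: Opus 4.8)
The plan is to pass to the limit in the weak formulations (\ref{int ident u}) and (\ref{int ident init adjoint prob}) by the energy method with oscillating capacity test functions, adapted to the dynamic boundary condition. Testing with a fixed $\varphi\in C_0^\infty(\Omega\times[0,T])$ (which belongs to $H^1(\Omega_\ee,\pa\Omega)$ for each $\ee$ but does not vanish on $S_\ee$), every term except the dynamic boundary integrals $\ee^{-\gamma}\int_{S^T_\ee}\pa_t u_\ee\,\varphi$ and $\ee^{-\gamma}\int_{S^T_\ee}\pa_t p_\ee\,\varphi$ converges to its evident limit: the perforations have vanishing measure, $\pa_t\tilde u_\ee\rightharpoonup\pa_t u_0$ and $\pa_t\tilde p_\ee\rightharpoonup\pa_t p_0$ weakly, $\nabla\tilde u_\ee\rightharpoonup\nabla u_0$, $\nabla\tilde p_\ee\rightharpoonup\nabla p_0$ weakly, and $\tilde u_\ee\to u_0$, $\tilde p_\ee\to p_0$ strongly in $L^2(Q^T)$ (so that $p_\ee\chi_{\omega_\ee}\to p_0\chi_\omega$, which fixes the right-hand side $f-N^{-1}p_0\chi_\omega$). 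The whole difficulty, and the entire source of the strange terms, is to compute the limits of the two dynamic boundary integrals; the constants $\mathcal{A}_n$, $\mathcal{B}_n$ and the nonlocal operators $H$, $H^*$ will be produced by the interplay between the spatial capacity of the balls and the temporal scaling $\ee^{-\gamma}$.

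First I would record the limit relations for the capacity potentials. On each cell $Y^j_\ee$ let $W_\ee$ be the radial harmonic function in the annulus $a_\ee<|x-\ee j|<\ee/4$ equal to $1$ on $S^j_\ee$ and to $0$ for $|x-\ee j|\ge\ee/4$, extended by $1$ on the balls and by $0$ elsewhere. An explicit computation gives $\int_{S^j_\ee}\pa_\nu W_\ee\,ds=(n-2)w_n a_\ee^{n-2}(1+o(1))$ and $\ee^{-\gamma}|S^j_\ee|=w_n C_0^{n-1}\ee^{n}$; using $a_\ee=C_0\ee^\gamma$ and $\gamma(n-2)=n$, summation over the $\sim\ee^{-n}$ cells turns these local quantities into the densities $\mathcal{A}_n=(n-2)C_0^{n-2}w_n$ and $\mathcal{B}_n=(n-2)C_0^{-1}$, the smooth contributions to the fluxes being $O(\ee^\gamma)$ and hence negligible. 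These are the standard relations already used in the elliptic and time-dependent precursors \cite{ShDyn19,TimeDepNon20}, and I would isolate them in a preliminary lemma together with $W_\ee\to0$ strongly in $L^2(\Omega)$.

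The core step is the identification of the limit of the traces on the particles. Writing, near a ball, $u_\ee\approx\tilde u_\ee+(g^j_\ee-\tilde u_\ee)W_\ee$ with $g^j_\ee(t)$ the average of $u_\ee$ on $S^j_\ee$, and integrating the dynamic condition $\ee^{-\gamma}\pa_t u_\ee+\pa_\nu u_\ee=0$ over $S^j_\ee$, the flux relations above give in the limit the forward ODE $\pa_t g+\mathcal{B}_n g=\mathcal{B}_n u_0$ with $g(\cdot,0)=0$, that is $g=\mathcal{B}_n H(u_0)$ with $H$ as in (\ref{H prob}). Since then $\pa_t g=\mathcal{B}_n(u_0-\mathcal{B}_n H(u_0))$, the Riemann-sum passage $\ee^{-\gamma}\sum_j\int_0^T\pa_t g^j_\ee\,\varphi(\ee j,t)\,|S^j_\ee|\,dt\to w_nC_0^{n-1}\int_{Q^T}\pa_t g\,\varphi$ yields $\ee^{-\gamma}\int_{S^T_\ee}\pa_t u_\ee\,\varphi\to\mathcal{A}_n\int_{Q^T}(u_0-\mathcal{B}_n H(u_0))\varphi$, exactly the strange term of the first equation in (\ref{limit coupled prob}). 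The same scheme for the adjoint, using $\pa_\nu p_\ee-\ee^{-\gamma}\pa_t p_\ee=\pa_\nu(u_\ee-u_T)$ and the final datum $p_\ee(\cdot,T)=(u_\ee-u_T)(\cdot,T)$, leads for the limit trace $h$ of $p_\ee$ to the backward ODE $-\pa_t h+\mathcal{B}_n h=\mathcal{B}_n(p_0+\mathcal{B}_n H(u_0)-u_0)$ with $h(\cdot,T)=\mathcal{B}_n H(u_0)(\cdot,T)-u_T(\cdot,T)$; solving it gives $h=\mathcal{B}_n H(u_0)+\mathcal{B}_n H^*(p_0)-\mathcal{B}_n^2 H^*(H(u_0))-e^{\mathcal{B}_n(t-T)}u_T(\cdot,T)$, and substituting into $-\ee^{-\gamma}\int_{S^T_\ee}\pa_t p_\ee\,\varphi\to-w_nC_0^{n-1}\int_{Q^T}\pa_t h\,\varphi$ reproduces, after the defining identities for $H$ and $H^*$, precisely the strange terms of the second equation in (\ref{limit coupled prob}). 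Passing to the limit in (\ref{int ident u}) and (\ref{int ident init adjoint prob}) then gives both bulk equations, while $u_0(\cdot,0)=0$, $p_0(\cdot,T)=(u_0-u_T)(\cdot,T)$ and the Dirichlet data on $\Gamma^T$ follow from the strong $L^2$ convergence and the boundary conditions carried by $\tilde u_\ee$, $\tilde p_\ee$.

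The main obstacle is to make the trace identification rigorous. One must bound the remainder $u_\ee-\tilde u_\ee-(g^j_\ee-\tilde u_\ee)W_\ee$ uniformly in $\ee$, show that $g^j_\ee$ is constant on $S^j_\ee$ up to a negligible oscillation and converges to the ODE solution, and justify the Riemann-sum passages; all of this must be done in the presence of the time derivative, which the dynamic condition converts into a cellwise ODE and which requires the uniform-in-$\ee$ bounds on $\pa_t u_\ee$, $\pa_t p_\ee$ obtained in Section 3. The adjoint is the more delicate of the two because of its backward-in-time character and the extra boundary forcing $\pa_\nu(u_\ee-u_T)$: its smooth part must be shown to be of lower order, while its capacity part, together with the nonzero final datum, is what regenerates the $\mathcal{B}_n^2 H^*(H(u_0))$ and $e^{\mathcal{B}_n(t-T)}u_T(\cdot,T)$ contributions.
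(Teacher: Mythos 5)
Your formal scheme produces the right answer: the forward ODE $\partial_t g+\mathcal{B}_n g=\mathcal{B}_n u_0$, $g(\cdot,0)=0$ for the limiting particle trace of $u_\varepsilon$, the backward ODE for the trace $h$ of $p_\varepsilon$ with final datum $\mathcal{B}_nH(u_0)(\cdot,T)-u_T(\cdot,T)$, and your solution formula for $h$ do reproduce, after substituting $\partial_t h$, exactly the strange terms of \eqref{limit coupled prob} (I checked that $-\omega_nC_0^{n-1}\partial_t h=\mathcal{A}_n\bigl[(p_0-\mathcal{B}_nH^*(p_0))-(u_0-\mathcal{B}_n^2H^*(H(u_0)))+e^{\mathcal{B}_n(t-T)}u_T(\cdot,T)\bigr]$, matching the paper's limit identity). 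However, the argument has a genuine gap precisely at the step you yourself defer to ``the main obstacle'': the cell-wise derivation of these ODEs is never justified, and it cannot be justified from the ingredients you invoke. The available information (Section 3) is only weak convergence of $\tilde u_\varepsilon,\tilde p_\varepsilon$ in $L^2(0,T;H^1_0(\Omega))$ plus uniform bounds on the weighted time derivatives; this does not allow you to (a) validate the ansatz $u_\varepsilon\approx\tilde u_\varepsilon+(g^j_\varepsilon-\tilde u_\varepsilon)W_\varepsilon$ with a remainder that is small in energy near the particles, nor (b) identify the flux $\int_{S^j_\varepsilon}\partial_\nu u_\varepsilon\,ds$ with the capacity expression $(n-2)\omega_na_\varepsilon^{n-2}\bigl(g^j_\varepsilon-u_0(\varepsilon j,t)\bigr)(1+o(1))$, which is what feeds the ODE. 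In the critical case the gradient energy of $u_\varepsilon$ concentrates at order one in the annuli around the particles (this concentration \emph{is} the strange term), so estimates of the type $\varepsilon^{-\gamma}\sum_j\|u_\varepsilon-g^j_\varepsilon\|^2_{L^2(S^j_\varepsilon)}\le C\varepsilon^{-\gamma}a_\varepsilon\|\nabla u_\varepsilon\|^2_{L^2(\Omega_\varepsilon)}=CC_0\|\nabla u_\varepsilon\|^2_{L^2(\Omega_\varepsilon)}$ are bounded but not small; making the flux identification rigorous amounts to a strong corrector estimate, which is normally a \emph{consequence} of the homogenization theorem rather than an ingredient for it. As written, your plan is circular at this point, and no mechanism is proposed to break the circle.

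The paper breaks it by a different device that requires no cell-wise control of the solution whatsoever: it embeds the ODE into the test function. Taking $W_\varepsilon H^*(\varphi)$ (resp.\ $W_\varepsilon H(\varphi)$) as test function in \eqref{int ident u} (resp.\ \eqref{int ident init adjoint prob}), applying Green's formula to the explicit potential $W_\varepsilon$ so that the surface integrals over $\partial T^j_{\varepsilon/4}$ and $\partial G^j_\varepsilon$ appear with computable weights, invoking Lemma~1 of \cite{ZuShDiffEq} for the limit of $\varepsilon\sum_j\int_0^T\int_{\partial T^j_{\varepsilon/4}}(\cdot)\,ds\,dt$, and integrating by parts in time using $u_\varepsilon(\cdot,0)=0$ and $H^*(\varphi)(\cdot,T)=0$ so that the combination $-\partial_tH^*(\varphi)+\mathcal{B}_nH^*(\varphi)=\varphi$ is produced, one obtains the trace identification \eqref{u S limit}, $\lim_{\varepsilon\to0}\varepsilon^{-\gamma}\int_{S^T_\varepsilon}u_\varepsilon\varphi\,ds\,dt=\mathcal{A}_n\int_{Q^T}H(u_0)\varphi\,dx\,dt$, directly from the weak convergences \eqref{limit func def}; a second pass with the test function $\varphi W_\varepsilon$ then yields the limits of $\varepsilon^{-\gamma}\int_{S^T_\varepsilon}\partial_t u_\varepsilon\varphi\,ds\,dt$ and $\varepsilon^{-\gamma}\int_{S^T_\varepsilon}\partial_t p_\varepsilon\varphi\,ds\,dt$, and the same scheme applied to the adjoint identity (using the final datum $p_\varepsilon(\cdot,T)=(u_\varepsilon-u_T)(\cdot,T)$ on $S_\varepsilon$ and the explicit computation $H^*(\partial_tu_T-\mathcal{B}_nu_T)=e^{\mathcal{B}_n(t-T)}u_T(\cdot,T)-u_T$) generates the $e^{\mathcal{B}_n(t-T)}u_T(\cdot,T)$ and $\mathcal{B}_n^2H^*(H(u_0))$ contributions. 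If you want to salvage your outline, this test-function trick is the missing mechanism: it converts your heuristic cell ODEs into rigorous integral identities without any remainder, oscillation, or Riemann-sum estimates.
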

\begin{remark}
    In the problems \eqref{H prob}, \eqref{adj H prob}, we can view $x$ as a parameter, and for a.e. $x$, we have
    \begin{gather*}
        \int\limits^T_0 H(\varphi)\psi{dt} = \int\limits^T_0 H(\varphi)(-\pa_t H^*(\psi) + \mathcal{B}_n H^*(\psi)){dt}=\\
        = \int\limits^T_0 \mathcal{B}_n H(\varphi)H^*(\psi){dt} - H(\varphi) H^*(\varphi)\Big\vert^T_0 + \int\limits^T_0 \pa_t H(\varphi) H^*(\psi){dt} = \\
        = \int\limits^T_0 (\pa_t H(\varphi) + \mathcal{B}_n H(\varphi))H^*(\psi){dt} = \int\limits^T_0 \varphi H^*(\psi){dt}.
    \end{gather*}
    Thus, we have
    \begin{equation}\label{H adj rel}
        \int\limits^T_0 H(\varphi) \psi{dt} = \int\limits^T_0 \varphi H^*(\psi){dt}.
    \end{equation}
\end{remark}
\begin{remark}\label{H explicit solution}
    The solutions to the problems \eqref{H prob} and \eqref{adj H prob} can be found explicitly using the standard methods. We have
    \begin{equation}\label{eq: H explicit}
        H(\varphi)(x, t) = \int\limits^t_0 e^{-\mathcal{B}_n(t - \tau)}\varphi(x, \tau){d\tau},\quad
        H^*(\varphi)(x, t) = \int\limits^T_{t}e^{\mathcal{B}_n(t - \tau)} \varphi(x, \tau) {d\tau}.
    \end{equation}
    This explains the non-local in time nature of the \textquotedblleft strange
terms\textquotedblright\ $H(u_0)(x,t)$ and $H^{\ast }(p_0)(x,t)$
arising in the homogenized system (\ref{limit coupled prob}).
\end{remark}

The pair of functions $(u_{0},p_{0})$ can be used now to characterize the
the optimal control problem is given by the homogenized state problem (\ref%
{limit u prob}) and a suitable limit cost functional $J_{0}(v)$ which we
will show to be given by expression (\ref{limit cost functional}).

\begin{theorem}\label{thm: cost func limit} Under the same conditions as in Theorem~\ref%
{thm: main theorem}, we have 
\begin{equation*}
\lim\limits_{\varepsilon \rightarrow 0}J_{\varepsilon }(v_{\varepsilon
})=J_{0}(v_{0}),
\end{equation*}%
where $v_{\varepsilon }$ is the optimal control of the problem 
\eqref{init
state prob}, \eqref{cost functional}, and $v_{0}$ is the optimal control of
the problem \eqref{limit u prob}, \eqref{limit cost functional}. 
\end{theorem}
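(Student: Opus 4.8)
The plan is to avoid taking the limit of the quadratic gradient energy $\|\nabla u_\ee\|_{L^2(Q^T_\ee)}^2$ directly: this quantity is only weakly lower semicontinuous and, because of the boundary layers forming near the critically scaled particles, it does \emph{not} converge to $\|\nabla u_0\|_{L^2(Q^T)}^2$ (the defect is exactly what produces the strange terms). Instead I would first rewrite $J_\ee(v_\ee)$ in a form that is affine in $(u_\ee,p_\ee)$. Expanding the squares in \eqref{cost functional}, inserting the energy identity derived at the beginning of Section~3 (obtained by testing the $u_\ee$-equation with $p_\ee$, the $p_\ee$-equation with $u_\ee$ and subtracting), and using the optimality relation $v_\ee=-N^{-1}p_\ee\chi_{\omega_\ee}$ of Theorem~\ref{thm: init opt cont}, the quadratic state terms are absorbed into the cross terms while the control cost $\tfrac{N}{2}\|v_\ee\|_{L^2(\omega^T_\ee)}^2=\tfrac{1}{2N}\|p_\ee\|_{L^2(\omega^T_\ee)}^2$ cancels the matching term of the identity. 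One is then left with
\[
J_\ee(v_\ee)=\tfrac12\int_{Q^T_\ee}f p_\ee\,dxdt-\tfrac12\Big(\int_{Q^T_\ee}\nabla u_\ee\nabla u_T\,dxdt+\int_{\Omega_\ee}(u_\ee u_T)(\cdot,T)\,dx+\ee^{-\gamma}\int_{S_\ee}(u_\ee u_T)(\cdot,T)\,ds\Big)+\tfrac12 U_\ee,
\]
where $U_\ee=\|\nabla u_T\|_{L^2(Q^T_\ee)}^2+\|u_T(\cdot,T)\|_{L^2(\Omega_\ee)}^2+\ee^{-\gamma}\|u_T(\cdot,T)\|_{L^2(S_\ee)}^2$ depends only on the data.

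Next I would pass to the limit term by term. Since $|G_\ee|\to0$ in the critical regime, the extensions satisfy $\tilde p_\ee\to p_0$ strongly and $\nabla\tilde u_\ee\rightharpoonup\nabla u_0$ weakly in $L^2(Q^T)$, with $\tilde u_\ee(\cdot,T)\rightharpoonup u_0(\cdot,T)$; hence the bulk integrals and the bulk part of $U_\ee$ converge to their natural limits (with $u_\ee$ replaced by $u_0$). The genuinely critical contributions are the two weighted surface integrals. A Riemann-sum computation based on $a_\ee=C_0\ee^\gamma$ and the calibration $\gamma(n-2)=n$ gives $\ee^{-\gamma}\int_{S_\ee}g\,ds\to C_0^{n-1}\omega_n\int_\Omega g\,dx$ for smooth $g$, so the data surface term tends to $C_0^{n-1}\omega_n\|u_T(\cdot,T)\|_{L^2(\Omega)}^2$. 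For the mixed surface term I would use the boundary-trace identification established in the proof of Theorem~\ref{thm: main theorem}: in the limit the particle-boundary values of $u_\ee$ solve the cell ODE $\pa_t w+\mathcal{B}_n w=\mathcal{B}_n u_0$, $w(\cdot,0)=0$, that is $w=\mathcal{B}_n H(u_0)$, whence $\ee^{-\gamma}\int_{S_\ee}(u_\ee u_T)(\cdot,T)\,ds\to C_0^{n-1}\omega_n\int_\Omega(\mathcal{B}_n H(u_0)\,u_T)(\cdot,T)\,dx$. Collecting these limits yields a value $\lim_{\ee}J_\ee(v_\ee)=L$ expressed through $u_0,p_0,H(u_0)$ and $u_T$.

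It then remains to verify $L=J_0(v_0)$. Here I would expand \eqref{limit cost functional}, use $v_0=-N^{-1}\chi_\omega p_0$ and the algebraic relation $C_0^{n-1}\omega_n\,\mathcal{B}_n=\mathcal{A}_n$, so that the equality $L=J_0(v_0)$ reduces to a single \emph{limit energy identity} for $\int_{Q^T}f p_0$. This identity is produced exactly as at the $\ee$-level: test the first equation of \eqref{limit coupled prob} with $p_0$ and the second with $u_0$, subtract, integrate by parts in $t$ (using $u_0(\cdot,0)=0$ and $p_0(\cdot,T)=(u_0-u_T)(\cdot,T)$), and cancel the two nonlocal cross terms $\mathcal{A}_n\mathcal{B}_n\int_{Q^T}H(u_0)p_0$ and $\mathcal{A}_n\mathcal{B}_n\int_{Q^T}u_0 H^*(p_0)$ by means of the adjoint relation \eqref{H adj rel}. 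Writing $h=H(u_0)$ and $u_0=\pa_t h+\mathcal{B}_n h$, all remaining quadratic contributions in $h$ match, and the identity collapses to the purely temporal statement
\[
\int_{Q^T}e^{\mathcal{B}_n(t-T)}\,u_T(\cdot,T)\,u_0\,dxdt=\int_\Omega u_T(\cdot,T)\,H(u_0)(\cdot,T)\,dx,
\]
which is immediate from the explicit representation \eqref{eq: H explicit} evaluated at $t=T$. This closes $\lim_{\ee\to0}J_\ee(v_\ee)=J_0(v_0)$.

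The main obstacle is the passage to the limit in the weighted surface integrals at the fixed final time $t=T$. Unlike the space--time strange term handled in Theorem~\ref{thm: main theorem}, these are traces on a single time slice, so the convergence of the particle-boundary values must be upgraded up to $t=T$; I would combine the uniform bound $\ee^{-\gamma}\|\pa_t u_\ee\|_{L^2(S^T_\ee)}^2\le K$ with $u_T\in C(\overline{Q^T})$ and the critical surface-measure calibration. Identifying the surface-trace limit as $\mathcal{B}_n H(u_0)$ rather than $u_0$ is precisely the mechanism responsible for the two unexpected last terms of $J_0$ in \eqref{limit cost functional}.
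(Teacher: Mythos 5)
Your proposal is correct and, in its overall architecture, coincides with the paper's proof: the same affine reduction of $J_\ee(v_\ee)$ (the optimality relation $v_\ee=-N^{-1}p_\ee\chi_{\omega_\ee}$ plus the two integral identities cancel all terms quadratic in $(u_\ee,p_\ee)$, leaving $\tfrac12\int_{Q^T_\ee}fp_\ee\,dxdt$, terms linear in $u_\ee$, and data terms in $u_T$), the same limit values term by term, and the same reconstruction of $J_0(v_0)$ via the limit energy identity for $\int_{Q^T}fp_0\,dxdt$ obtained by testing \eqref{limit coupled prob} and using \eqref{H adj rel}; your verification of the final temporal identity directly from \eqref{eq: H explicit} at $t=T$ is in fact a slight simplification of the paper's manipulation with $H^*(\pa_t u_T-\mathcal{B}_n u_T)$. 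The one genuine divergence is the step you yourself flag as the main obstacle: the mixed surface term $\ee^{-\gamma}\int_{S_\ee}(u_\ee u_T)(\cdot,T)\,ds$ on the fixed time slice. You propose to upgrade the space--time identification of the particle-boundary trace (limit $\mathcal{B}_nH(u_0)$) to $t=T$ via the uniform bound on $\ee^{-\gamma}\Vert\pa_t u_\ee\Vert^2_{L^2(S^T_\ee)}$; this is workable (the functions $t\mapsto \ee^{-\gamma}\int_{S_\ee}u_\ee\psi\,ds$ are bounded in $H^1(0,T)$, hence precompact in $C([0,T])$, and the limit is identified by the space--time convergence), but it is heavier than necessary. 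The paper avoids any time-slice trace convergence: since $u_\ee(x,0)=0$ on $S_\ee$, it writes
\begin{equation*}
\int\limits_{S_\ee}(u_\ee u_T)(x,T)\,ds=\int\limits_{S^T_\ee}\pa_t\bigl(u_\ee u_T\bigr)\,dsdt=\int\limits_{S^T_\ee}\bigl(\pa_t u_\ee\, u_T+u_\ee\,\pa_t u_T\bigr)\,dsdt,
\end{equation*}
and then reuses the two space--time convergences already established in Section~\ref{sec: char u}, whose limits recombine into $\mathcal{A}_n\int_\Omega u_T(x,T)H(u_0)(x,T)\,dx$ precisely because $\pa_t H(u_0)=u_0-\mathcal{B}_nH(u_0)$; the same conversion is applied to the bulk term $\int_{\Omega_\ee}(u_\ee u_T)(\cdot,T)\,dx$. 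Both routes give the same value, $C_0^{n-1}\omega_n\mathcal{B}_n\int_\Omega H(u_0)(x,T)\,u_T(x,T)\,dx$, consistent with your claim. Note finally that on either route the Section~\ref{sec: char u} convergences, proved for test functions $\psi(x)\eta(t)$ with $\psi\in C^\infty_0(\Omega)$, are applied with the non-smooth functions $u_T$ and $\pa_t u_T$; this requires a density argument that both you and the paper leave implicit.
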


\begin{remark}
    The optimal control $v_0$ of the problem \eqref{limit u prob}, \eqref{limit cost functional} is characterized by the coupled system \eqref{limit coupled prob} and we have the relation $v_0 = -N^{-1}\chi_{\omega}p_0$. Thus, the theorem similar to Theorem~\ref{thm: init opt cont} is also valid.
\end{remark}


\begin{proof} We begin the proof of the Theorem~\ref{thm: main theorem} that is split into two parts. First, we derive effective equations for $u_0$ in Section~\ref{sec: char u}. Second, based on the derived problem for $u_0$, we find the limit problem for $p_0$ on Section~\ref{sec: char p}.

\section{Characterization of $u_0$}\label{sec: char u}
First, we define auxiliary functions $w^j_\ee$ as a solution to the boundary value problem
\begin{equation}\label{aux func prop}
    \left\{
        \begin{array}{lr}
            \Delta w^j_\ee = 0, & x\in T^j_{\ee/4}\setminus\overline{G^j_\ee}, \\
            w^j_\ee = 0, & x\in \pa T^j_{\ee/4}, \\
            w^j_\ee = 1, & x\in \pa G^j_\ee.
        \end{array}
    \right.
\end{equation}
The solution to \eqref{aux func prop} is given explicitly by
\begin{equation}\label{aux func explicit}
    w^j_\ee = \frac{|x - P^j_\ee|^{2 - n} - (\ee/4)^{2 - n}}{a_\ee^{2 - n} - (\ee/4)^{2 - n}}.
\end{equation}
Next, we define
\begin{equation*}
    W_\ee = \left\{
        \begin{array}{lr}
            w^j_\ee, & x \in T^j_{\ee/4}\setminus\overline{G^j_\ee},\, j\in\Upsilon_\ee, \\
            1, & x \in G_\ee,\\
            0, & x \in \mathbb{R}^n\setminus \overline{\bigcup_{j\in\Upsilon_\ee}T^j_{\ee/4} }.
        \end{array}
    \right.
\end{equation*}
It is easy to see that $W_\ee\in H^1_0(\Omega, \pa\Omega)$ and $W_\ee \rightharpoonup 0$ weakly in $H^1_0(\Omega)$ as $\ee\to 0$. The embedding theorem implies that $W_\ee \to 0$ strongly in $L^2(\Omega)$.


We take $W_\ee H^*(\varphi)$, where $\varphi = \psi(x) \eta(t)$ with $\psi(x)\in C^\infty_0(\Omega)$, $\eta(t) \in C^1([0, T])$, as a test function in the integral identity \eqref{int ident u} and get
\begin{gather*}
    \int\limits_{Q^T_\ee}\pa_t u_\ee W_\ee H^*(\varphi){dxdt} + \ee^{-\gamma}\int\limits_{S^T_\ee}\pa_t u_\ee H^*(\varphi){dsdt} + \int\limits_{Q^T_\ee}\nabla u_\ee \nabla(W_\ee H^*(\varphi)){dxdt} = \\
    = \int\limits_{Q^T_\ee}(f - N^{-1}\chi_{\omega_\ee}p_\ee)W_\ee H^*(\varphi){dxdt}.
\end{gather*}

Using convergences \eqref{limit func def} and the properties of $W_\ee$, we have
\begin{gather*}
    \lim\limits_{\ee\to0}\int\limits_{Q^T_\ee}\pa_t u_\ee W_\ee H^*(\varphi){dxdt} = 0,\\
    \lim\limits_{\ee\to0}\int\limits_{Q^T_\ee}(f - N^{-1}\chi_{\omega_\ee}p_\ee)W_\ee H^*(\varphi) {dxdt} = 0,\\
    \lim\limits_{\ee\to 0}\int\limits_{Q^T_\ee}\nabla u_\ee\nabla(W_\ee H^*(\varphi)){dxdt} = \lim\limits_{\ee\to0}\int\limits_{Q^T_\ee}\nabla(u_\ee H^*(\varphi))\nabla W_\ee{dxdt}.
\end{gather*}
Thus, we get
\begin{equation*}
    \ee^{-\gamma}\int\limits_{S^T_\ee}\pa_t u_\ee H^*(\varphi){dsdt} = -\int\limits_{Q^T_\ee}\nabla W_\ee\nabla(u_\ee H^*(\varphi)){dxdt} + \alpha_{1, \ee},
\end{equation*}
where $\alpha_{1, \ee} \to 0$ as $\ee\to 0$. Then, we make the decomposition
\begin{gather*}
    \int\limits_{Q^T_\ee}\nabla W_\ee\nabla (u_\ee H^*(\varphi)){dxdt} = \\
    = \sum\limits_{j\in\Upsilon_\ee}\int\limits^T_0\int\limits_{\pa T^j_{\ee/4}}\pa_\nu w^j_\ee u_\ee H^*(\varphi){dsdt} +\sum\limits_{j\in\Upsilon_\ee}\int\limits^T_0\int\limits_{\pa G^j_\ee}\pa_\nu w^j_\ee u_\ee H^*(\varphi){dsdt} = \\
    = -\ee C^{n - 2}_0(n - 2) 4^{n - 1}\sum\limits_{j\in\Upsilon_\ee}\int\limits^T_0\int\limits_{\pa T^j_{\ee/4}}u_\ee H^*(\varphi){dsdt} 
    + \mathcal{B}_n\ee^{-\gamma}\int\limits_{S^T_\ee}u_\ee H^*(\varphi){dsdt} + \alpha_{2, \ee},
\end{gather*}
where $\alpha_{2, \ee} \to 0$ as $\ee\to 0$. For the first integral, we use Lemma~1 from \cite{ZuShDiffEq} to get
\begin{equation*}
    \lim\limits_{\ee\to0}\ee C^{n-2}_0(n-2) 4^{n - 1} \sum\limits_{j\in\Upsilon_\ee}\int\limits^T_0\int\limits_{\pa T^j_{\ee/4}}u_\ee H^*(\varphi){dsdt} =\mathcal{A}_n \int\limits_{Q^T}u_0 H^*(\varphi){dxdt} = \mathcal{A}_n\int\limits_{Q^T}H(u_0) \varphi{dxdt},
\end{equation*}
where the last equality is due to \eqref{H adj rel}. Thus, we derive
\begin{gather}\label{u S limit 1}
    \lim\limits_{\ee\to 0}\ee^{-\gamma}\int\limits_{S^T_\ee}\pa_t u_\ee H^*(\varphi){dsdt} = \mathcal{A}_n \int\limits_{Q^T}H(u_0) \varphi{dxdt} - \lim\limits_{\ee\to0}\ee^{-\gamma}\int\limits_{S^T_\ee}\mathcal{B}_n u_\ee H^*(\varphi){dsdt} 
\end{gather}
Then, we have
\begin{gather}
    \nonumber\int\limits_{S^T_\ee}\pa_t u_\ee H^*(\varphi){dsdt} = \int\limits_{S_\ee}(u_\ee(x, T) H^*(\varphi)(x, T) - u_\ee(x, 0) H^*(\varphi)(x, 0)){ds} -\\ \label{H u change}
    - \int\limits_{S^T_\ee}u_\ee \pa_t H^*(\varphi){dsdt} 
    = - \int\limits_{S^T_\ee}u_\ee \pa_t H^*(\varphi){dsdt},
\end{gather}
the last equality follows from the fact that $u_\ee(x, 0) = 0$ and $H^*(\varphi)(x, T) = 0$ for $x\in S_\ee$.

Thus, from \eqref{u S limit 1}, \eqref{H u change}, we conclude that
\begin{equation*}
    \lim\limits_{\ee\to 0}\ee^{-\gamma}\int\limits_{S^T_\ee}u_\ee (-\pa_t H^*(\varphi) + \mathcal{B}_n H^*(\varphi)){dsdt} = \mathcal{A}_n\int\limits_{Q^T}H(u_0) \varphi{dxdt}.
\end{equation*}
Using the definition of $H^*(\varphi)$, we get
\begin{equation}\label{u S limit}
    \lim\limits_{\ee\to 0}\ee^{-\gamma}\int\limits_{S^T_\ee}u_\ee \varphi{dsdt} = \mathcal{A}_n\int\limits_{Q^T}H(u_0) \varphi{dxdt}.
\end{equation}

Now, we take $\varphi W_\ee$ in the integral identity \eqref{int ident u} and, similarly to the above, using \eqref{u S limit}, we derive
\begin{gather*}
    \lim\limits_{\ee\to 0}\ee^{-\gamma}\int\limits_{S^T_\ee} \pa_t u_\ee \varphi{dsdt} = \mathcal{A}_n\int\limits_{Q^T}u_0 \varphi{dxdt} - \lim\limits_{\ee\to0}\ee^{-\gamma}\mathcal{B}_n\int\limits_{S^T_\ee}u_\ee \varphi{dsdt} = \\
    = \mathcal{A}_n\int\limits_{Q^T}u_0 \varphi{dxdt} - \mathcal{A}_n\mathcal{B}_n\int\limits_{Q^T}H(u_0)\varphi{dxdt} = \mathcal{A}_n\int\limits_{Q^T}(u_0 - \mathcal{B}_n H(u_0))\varphi{dxdt}.
\end{gather*}
Now, we can pass to the limit as $\ee\to 0$ in the integral identity \eqref{int ident u}. Doing so, we conclude that $u_0$ satisfies the integral identity
\begin{gather*}
    \int\limits_{Q^T}\pa_t u_0 v{dxdt} + \int\limits_{Q^T}\nabla u_0 \nabla v{dxdt} 
    + \mathcal{A}_n\int\limits_{Q^T}(u_0 - \mathcal{B}_n H(u_0))v{dxdt} = \int\limits_{Q^T}(f - N^{-1}\chi_{\omega}p_0)v{dxdt}.
\end{gather*}

\section{Characterization of $p_0$}\label{sec: char p}
We take $\varphi = W_\ee H(\varphi)$, where $\varphi = \psi(x) \eta(t)$ with $\psi\in C^\infty_0(\Omega)$, $\eta\in C^1([0, T])$ as a test function in the integral identity \eqref{int ident init adjoint prob} and get
\begin{gather*}
    \int\limits_{Q^T_\ee}\nabla W_\ee \nabla(p_\ee H(\varphi)){dxdt} -\ee^{-\gamma}\int\limits_{S^T_\ee}\pa_t(p_\ee - u_\ee) H(\varphi){dsdt} =\\ 
    = \int\limits_{Q^T_\ee}\pa_t(p_\ee - u_\ee)W_\ee H(\varphi){dxdt} + \int\limits_{Q^T_\ee}(f - N^{-1}\chi_{\omega_\ee}p_\ee)W_\ee H(\varphi){dxdt} + \beta_\ee = \kappa_\ee, 
\end{gather*}
where $\beta_\ee \to 0$ as $\ee\to 0$. Due to the properties of $u_\ee$, $p_\ee$, $W_\ee$, we have that $\kappa_\ee \to 0$ as $\ee\to0$. Hence, by decomposing the first integral, we derive
\begin{gather}
    \ee^{-\gamma}\mathcal{B}_n\int\limits_{S^T_\ee} p_\ee H(\varphi){dsdt} + \sum\limits_{j\in\Upsilon_\ee}\int\limits^{T}_{0}\int\limits_{\pa T^j_{\ee/4}}\pa_\nu w^j_\ee p_\ee H(\varphi){dsdt} 
    \label{p decomp 1}
    - \ee^{-\gamma}\int\limits_{S^T_\ee}\pa_t(p_\ee - u_\ee) H(\varphi){dsdt} = \kappa_{1, \ee},
\end{gather}
where $\kappa_{1, \ee}\to 0$ as $\ee\to 0$. For the last integral, we have
\begin{equation*}
\begin{gathered}
    \ee^{-\gamma}\int\limits_{S^T_\ee}\pa_t(p_\ee - u_\ee)H(\varphi){dsdt}  
    = \ee^{-\gamma}\int\limits_{S^T_\ee}\pa_t(p_\ee - u_\ee + u_T)H(\varphi){dsdt} -\ee^{-\gamma}\int\limits_{S^T_\ee}\pa_t u_T H(\varphi){dsdt} = \\
    = \ee^{-\gamma}\int\limits_{S_\ee} (p_\ee - u_\ee + u_T)(x, T)H(\varphi)(x, T){ds} 
    - \ee^{-\gamma}\int\limits_{S_\ee} (p_\ee - u_\ee + u_T)(x, 0)H(\varphi)(x, 0){ds} - \\
    - \ee^{-\gamma}\int\limits_{S^T_\ee}\pa_t H(\varphi)(p_\ee - u_\ee + u_T){dsdt} - \ee^{-\gamma}\int\limits_{S^T_\ee}\pa_t u_T H(\varphi){dsdt}.
\end{gathered}
\end{equation*}
From the definitions of $H(\varphi)$ and $p_\ee$, we have $H(\varphi)(x, 0) \equiv 0$ and $p_\ee(x, T) - u_\ee(x,T) + u_T(x, T) = 0$, hence, we obtain
\begin{equation*}
\begin{gathered}
   \ee^{-\gamma}\int\limits_{S^T_\ee}\pa_t(p_\ee - u_\ee)H(\varphi){dsdt} = - \ee^{-\gamma}\int\limits_{S^T_\ee}\pa_t u_T H(\varphi){dsdt} - \ee^{-\gamma}\int\limits_{S^T_\ee}\pa_t H(\varphi)(p_\ee - u_\ee + u_T){dsdt}.
\end{gathered}
\end{equation*}
Thus, substituting this expression into \eqref{p decomp 1}, we get
\begin{gather*}
    \ee^{-\gamma}\int\limits_{S^T_\ee}p_\ee(\mathcal{B}_n H(\varphi) + \pa_t H(\varphi)){dsdt} = -\sum\limits_{j\in\Upsilon_\ee}\int\limits^{T}_{0}\int\limits_{\pa T^j_{\ee/4}}\pa_\nu w^j_\ee p_\ee H(\varphi){dsdt} - \\
    - \ee^{-\gamma}\int\limits_{S^T_\ee} (\pa_t u_T H(\varphi) + \pa_t H(\varphi) u_T){dsdt} + \ee^{-\gamma}\int\limits_{S^T_\ee}\pa_t H(\varphi) u_\ee{dsdt} + \beta_{1,\ee},
\end{gather*}
where $\beta_{1, \ee} \to 0$ as $\ee\to0$. Using the definition of $H(\varphi)$, we obtain
\begin{gather}
    \ee^{-\gamma}\int\limits_{S^T_\ee}p_\ee \varphi{dsdt} =-\sum\limits_{j\in\Upsilon_\ee}\int\limits^{T}_{0}\int\limits_{\pa T^j_{\ee/4}}\pa_\nu w^j_\ee p_\ee H(\varphi){dsdt} - \nonumber\\ \label{p int limit}
    - \ee^{-\gamma}\int\limits_{S^T_\ee} (\pa_t u_T H(\varphi) + \pa_t H(\varphi) u_T){dsdt} + \ee^{-\gamma}\int\limits_{S^T_\ee}(\varphi - \mathcal{B}_n H(\varphi))u_\ee{dsdt} + \beta_{1, \ee}.
\end{gather}
The limit of the first integral on the equality's right-hand side can be found using Lemma 1 from \cite{ZuShDiffEq}. For the third integral, we use the convergence for $u_\ee$ obtained above. Let us find the limit of the second integral. Using the definition of $H$, we have
\begin{gather}
    \ee^{-\gamma}\int\limits_{S^T_\ee} (\pa_t u_T H(\varphi) + \pa_t H(\varphi) u_T){dsdt}
    = \ee^{-\gamma}\int\limits_{S^T_\ee}(\pa_t u_T H(\varphi) + (\varphi - \mathcal{B}_n H(\varphi))u_T){dsdt} = \nonumber \\
    = \ee^{-\gamma}\int\limits_{S^T_\ee}(\pa_t u_T - \mathcal{B}_n u_T)H(\varphi){dsdt} + \ee^{-\gamma}\int\limits_{S^T_\ee}u_T\varphi{dsdt} \label{p H limit int 1} 
    = \ee^{-\gamma}\int\limits_{S^T_\ee}(H^*(\pa_t u_T - \mathcal{B}_n u_T) + u_T)\varphi{dsdt}.
\end{gather}
Using \eqref{eq: H explicit}, we get
\begin{gather*}
    H^*(\pa_t u_T - \mathcal{B}_n u_T)(x, t) = \int\limits^T_t e^{\mathcal{B}_n(t - \tau)}(\pa_t u_T - \mathcal{B}_n u_T){d\tau} = \\
    = e^{\mathcal{B}_n(t - \tau)} u_T \vert^T_t + \int\limits^T_t \mathcal{B}_n e^{\mathcal{B}_n(t - \tau)} u_T{d\tau} - \int\limits^T_t \mathcal{B}_n e^{\mathcal{B}_n(t - \tau)} u_T{d\tau} 
    = e^{\mathcal{B}_n (t - T)}u_T(x, T) - u_T(x, t).
\end{gather*}
From here, it follows that
\begin{equation}\label{p H limit int 2}
    \ee^{-\gamma}\int\limits_{S^T_\ee}(H^*(\pa_t u_T - \mathcal{B}_n u_T) + u_T)\varphi{dsdt} = \ee^{-\gamma}\int\limits_{S^T_\ee}e^{\mathcal{B}_n (t - T)}u_T(x, T)\varphi(x, t){dsdt}.
\end{equation}

Using the regularity of the function $\varphi$ and $u_T$, we get the convergence
\begin{equation}\label{p H limit int 3}
    \lim\limits_{\ee\to 0}\ee^{-\gamma}\int\limits_{S^T_\ee}e^{\mathcal{B}_n (t - T)}u_T(x, T)\varphi{dsdt} = C_0^{n-1}\omega_n\int\limits_{Q^T}e^{\mathcal{B}_n (t - T)} u_T(x, T)\varphi(x, t){dxdt}.
\end{equation}

Using Lemma 1 from \cite{ZuShDiffEq} and \eqref{p H limit int 1}-\eqref{p H limit int 3}, from \eqref{p int limit}, we have
\begin{gather*}
    \lim\limits_{\ee\to 0}\ee^{-\gamma}\int\limits_{S^T_\ee}p_\ee \varphi{dsdt}
    =\mathcal{A}_n\int\limits_{Q^T}p_0 H(\varphi){dxdt} +  \\ 
    + \lim\limits_{\ee\to 0}\ee^{-\gamma}\int\limits_{S^T_\ee}(\varphi - \mathcal{B}_n H(\varphi))u_\ee{dsdt} - C^{n-1}_0\omega_n\int\limits_{Q^T}e^{\mathcal{B}_n(t - T)}u_T(x, T)\varphi{dxdt}.
\end{gather*}
For the second integral on the right-hand side of the expression above, we use the results of the previous section and get
\begin{gather*}
    \lim\limits_{\ee\to 0}\ee^{-\gamma}\int\limits_{S^T_\ee}p_\ee \varphi{dsdt} 
    = \mathcal{A}_n \int\limits_{Q^T}p_0 H(\varphi){dxdt} + \\ + \mathcal{A}_n\int\limits_{Q^T}(H(u_0)\varphi - \mathcal{B}_n H(u_0)H(\varphi)){dxdt} 
    - C^{n-1}_0\omega_n\int\limits_{Q^T}e^{\mathcal{B}_n(t - T)}u_T(x, T)\varphi{dxdt} = \\
    = \mathcal{A}_n \int\limits_{Q^T}H^*(p_0) \varphi{dxdt} + \mathcal{A}_n\int\limits_{Q^T}(H(u_0) - \mathcal{B}_n H^*(H(u_0)))\varphi{dxdt} 
    - C^{n-1}_0\omega_n\int\limits_{Q^T}e^{\mathcal{B}_n(t - T)}u_T(x, T)\varphi{dxdt}.
\end{gather*}
Again, we take $\varphi W_\ee$ as the test function in the integral identity \eqref{int ident init adjoint prob} and derive
\begin{gather*}
    -\ee^{-\gamma}\int\limits_{S^T_\ee}\pa_t p_\ee \varphi {dsdt} = -\ee^{-\gamma}\mathcal{B}_n\int\limits_{S^T_\ee}p_\ee \varphi {dsdt} - \\
    - \sum\limits_{j\in\Upsilon_\ee}\int\limits^T_0\int\limits_{\pa T^j_{\ee/4}}\pa_\nu w^j_\ee p_\ee \varphi{dsdt} - \ee^{-\gamma}\int\limits_{S^T_\ee}\pa_t u_\ee \varphi {dsdt} + \alpha_\ee,
\end{gather*}
where $\alpha_\ee \to 0$ as $\ee\to 0$. From here, we have (note that $C^{n-1}_0\omega_n \mathcal{B}_n = \mathcal{A}_n$)
\begin{gather*}
    \lim\limits_{\ee\to 0}-\ee^{-\gamma}\int\limits_{S^T_\ee}\pa_t p_\ee\varphi{dsdt} = 
    -\mathcal{A}_n\mathcal{B}_n\int\limits_{Q^T}(H^*(p_0) + H(u_0) - \mathcal{B}_n H^*(H(u_0)))\varphi{dxdt} + \\
    + \mathcal{A}_n\int\limits_{Q^T}e^{\mathcal{B}_n(t - T)}u_T(x, T)\varphi {dxdt} + \mathcal{A}_n \int\limits_{Q^T}p_0\varphi{dxdt} - \mathcal{A}_n\int\limits_{Q^T}(u_0 - H(u_0))\varphi{dxdt} = \\
    = \mathcal{A}_n\int\limits_{Q^T}((p_0 - \mathcal{B}_n H^*(p_0)) - (u_0 - \mathcal{B}_n H^*(H(u_0))))\varphi{dxdt} 
    + \mathcal{A}_n\int\limits_{Q^T}e^{\mathcal{B}_n(t - T)}u_T(x, T)\varphi {dxdt}.
\end{gather*}
Thus, passing to the limit in the integral identity \eqref{int ident init adjoint prob}, we conclude that $p_0$ satisfies the integral identity
\begin{equation}\label{limit p int ident}
    \begin{gathered}
        -\int\limits_{Q^T}\pa_t p_0 \varphi{dxdt} + \int\limits_{Q^T}\nabla p_0 \nabla\varphi{dxdt} + \\ 
        + \mathcal{A}_n\int\limits_{Q^T}(p_0 - \mathcal{B}_n H^*(p_0))\varphi{dxdt} + \mathcal{A}_n\int\limits_{Q^T}e^{\mathcal{B}_n(t - T)}u_T(x, T)\varphi {dxdt} =\\
        = \int\limits_{Q^T}\nabla (u_0 - u_T) \nabla \varphi{dxdt} + \mathcal{A}_n \int\limits_{Q^T}(u_0 - \mathcal{B}_n^2 H^*(H(u_0)))\varphi{dxdt}.
    \end{gathered}
\end{equation}
This concludes the proof of the Theorem~\ref{thm: main theorem}.
\end{proof}

\section{Cost functional limit}
\begin{proof} Here, we give the proof of Theorem~\ref{thm: cost func limit}.

Let us find the limit of the cost functional $J_\ee$ as $\ee\to 0$. As $v_\ee = N^{-1}p_\ee$, we have
\begin{gather*}
    J_\ee(v_\ee) = \frac{1}{2}\int\limits_{Q^T_\ee}|\nabla u_\ee - u_T|^2{dxdt} + \frac{1}{2}\int\limits_{\Omega_\ee}(u_\ee - u_T)^2(x, T){dx} + \\
    + \frac{\ee^{-\gamma}}{2}\int\limits_{S_\ee}(u_\ee - u_T)^2(x, T){ds} 
    + \frac{1}{2N}\int\limits_{\omega^T_\ee}p_\ee^2{dxdt}.
\end{gather*}
Using integral identity \eqref{int ident u}, we transform the functional $J_\ee$ and get
\begin{gather*}
    J_\ee(v_\ee) = \frac{1}{2}\int\limits_{Q^T_\ee}|\nabla(u_\ee - u_T)|^2{dxdt} + \frac{1}{2}\int\limits_{\Omega_\ee}(u_\ee - u_T)^2(x, T){dx} + \\
    + \frac{\ee^{-\gamma}}{2}\int\limits_{S_\ee}(u_\ee - u_T)^2(x, T){ds} + \frac{1}{2}\int\limits_{Q^T_\ee}f p_\ee{dxdt} - \\ 
    - \frac{1}{2}\int\limits_{Q^T_\ee}\nabla u_\ee \nabla p_\ee {dxdt} 
    - \frac{1}{2}\int\limits_{Q^T_\ee}\pa_t u_\ee p_\ee{dxdt} - \frac{\ee^{-\gamma}}{2}\int\limits_{S^T_\ee}\pa_t u_\ee p_\ee{dsdt}.
\end{gather*}
Next, we have
\begin{gather*}
    - \frac{1}{2}\int\limits_{Q^T_\ee}\nabla u_\ee \nabla p_\ee {dxdt} 
    - \frac{1}{2}\int\limits_{Q^T_\ee}\pa_t u_\ee p_\ee{dxdt} - \frac{\ee^{-\gamma}}{2}\int\limits_{S^T_\ee}\pa_t u_\ee p_\ee{dsdt} =\\
    = - \frac{1}{2}\int\limits_{Q^T_\ee}\nabla u_\ee \nabla p_\ee {dxdt} 
    + \frac{1}{2}\int\limits_{Q^T_\ee}\pa_t p_\ee u_\ee{dxdt} + \frac{\ee^{-\gamma}}{2}\int\limits_{S^T_\ee}\pa_t p_\ee u_\ee{dsdt} - \\
    - \frac{1}{2}\int\limits_{\Omega_\ee}u_\ee(x, T)(u_\ee - u_T)(x, T){dx} - \frac{\ee^{-\gamma}}{2}\int\limits_{S_\ee}u_\ee(x, T)(u_\ee - u_T)(x, T){ds} = \\
    = -\frac{1}{2}\int\limits_{Q^T_\ee}\nabla(u_\ee - u_T)\nabla u_\ee{dxdt} - \frac{1}{2}\int\limits_{\Omega_\ee}u_\ee(x, T)(u_\ee - u_T)(x, T){dx} - \\
    - \frac{\ee^{-\gamma}}{2}\int\limits_{S_\ee}u_\ee(x, T)(u_\ee - u_T)(x, T){ds}.
\end{gather*}
Thus, we can further transform the expression for the cost functional
\begin{gather*}
    J_\ee(v_\ee) = \frac{1}{2}\int\limits_{Q^T_\ee}\nabla u_T\nabla (u_T - u_\ee){dxdt} + \frac{1}{2}\int\limits_{\Omega_\ee}u_T(x, T)(u_T - u_\ee)(x, T){dx} +\\
    +\frac{\ee^{-\gamma}}{2}\int\limits_{S_\ee}u_T(x, T) (u_T - u_\ee)(x, T){ds} + \frac{1}{2}\int\limits_{Q^T_\ee}fp_\ee{dxdt} = \\
    = \frac{1}{2}\int\limits_{Q^T_\ee}\nabla u_T\nabla (u_T - u_\ee){dxdt} + \frac{1}{2}\int\limits_{Q^T_\ee}f p_\ee{dxdt} + \\
    + \frac{1}{2}\int\limits_{\Omega_\ee}u^2_T(x, T){dx} - \frac{1}{2}\int\limits_{Q^T_\ee}(\pa_t u_\ee u_T + \pa_t u_T u_\ee){dxdt} + \\
    + \frac{\ee^{-\gamma}}{2}\int\limits_{S_\ee}u^2_T(x, T){ds} - \frac{\ee^{-\gamma}}{2}\int\limits_{S^T_\ee}(\pa_t u_\ee u_T + \pa_t u_T u_\ee){dxdt}.
\end{gather*}
Now, we should pass to the limit in the obtained expressions. First, properties of $u_\ee$, $\pa_t u_\ee$ and $p_\ee$ imply that, as $\ee\to 0$,
\begin{gather*}
    \int\limits_{Q^T_\ee}\nabla u_T \nabla(u_T - u_\ee){dxdt} \to \int\limits_{Q^T}\nabla u_T \nabla(u_T - u_0){dxdt},\\
    \int\limits_{Q^T_\ee}fp_\ee{dxdt} \to \int\limits_{Q^T}fp_0{dxdt},\\
    \int\limits_{Q^T_\ee}(\pa_t u_\ee u_T + \pa_t u_T u_\ee){dxdt} \to \int\limits_{Q^T}(\pa_t u_0 u_T + \pa_t u_T u_0){dxdt},\\
    \int\limits_{\Omega_\ee}u^2_T(x, T){dx} \to \int\limits_{\Omega}u^2_T(x, T){dx}.
\end{gather*}
Next, we find the limit of the integrals over $S_\ee$ based on the convergences obtained in the previous sections
\begin{gather*}
    \lim\limits_{\ee\to0}\ee^{-\gamma}\int\limits_{S^T_\ee}\pa_t u_T u_\ee{dsdt} = \mathcal{A}_n\int\limits_{Q^T}H(u_0)\pa_t u_T {dxdt},\\
    \lim\limits_{\ee\to0}\ee^{-\gamma}\int\limits_{S^T_\ee}\pa_t u_\ee u_T {dsdt} = \mathcal{A}_n\int\limits_{Q^T}(u_0 - \mathcal{B}_n H(u_0))u_T{dxdt},\\
    \lim\limits_{\ee\to0}\ee^{-\gamma}\int\limits_{S_\ee}u^2_T(x, T){ds} = C^{n-1}_0 \omega_n\int\limits_{\Omega}u^2_T(x, T){dx}.
\end{gather*}
Combining all of the above convergences, we derive
\begin{gather}
    \lim\limits_{\ee\to0}J_\ee(v_\ee) = \frac{1}{2}\int\limits_{Q^T}\nabla u_T\nabla(u_T - u_0){dxdt} + \frac{1}{2}\int\limits_{Q^T}fp_0{dxdt} + \nonumber \\
    + \frac{1}{2}\int\limits_{\Omega} u^2_T(x, T){dx} + \frac{C^{n-1}_0 \omega_n}{2}\int\limits_{\Omega}u^2_T(x, T){dx} - \int\limits_{Q^T}(\pa_t u_0 u_T + \pa_t u_T u_0){dxdt} - \nonumber \\ \label{cost limit}
    - \frac{\mathcal{A}_n}{2}\int\limits_{Q^T}H(u_0)\pa_t u_T{dxdt} - \frac{\mathcal{A}_{n}}{2}\int\limits_{Q^T}(u_0 - \mathcal{B}_n H(u_0))u_T{dxdt}.
\end{gather}
Again, we have
\begin{equation}\label{cost limit 1}
    \frac{1}{2}\int\limits_{\Omega} u^2_T(x, T){dx} - \frac{1}{2}\int\limits_{Q^T}(\pa_t u_0 u_T + \pa_t u_T u_0){dxdt} 
    = \frac{1}{2}\int\limits_{\Omega} u_T(x, T)(u_T - u_0)(x, T){dx}.
\end{equation}
Also, using the definition of $H$, we get
\begin{gather}
    \frac{\mathcal{A}_n}{2}\int\limits_{Q^T}H(u_0)\pa_t u_T{dxdt} + \frac{\mathcal{A}_{n}}{2}\int\limits_{Q^T}(u_0 - \mathcal{B}_n H(u_0))u_T{dxdt} = \nonumber \\
    = \frac{\mathcal{A}_n}{2}\int\limits_{Q^T}(\pa_t u_T H(u_0) + \pa_t H(u_0) u_T){dxdt} = \frac{\mathcal{A}_n}{2}\int\limits_{Q^T}\pa_t (u_T H(u_0)){dxdt} = \nonumber \\ \label{cost limit 2}
    = \frac{\mathcal{A}_n}{2}\int\limits_{\Omega}u_T(x, T) H(u_0)(x, T){dx}.
\end{gather}

From the integral identities for $u_0$ and $p_0$, we get
\begin{gather}
    \frac{1}{2}\int\limits_{Q^T} f p_0{dxdt} = \frac{1}{2}\int\limits_{Q^T}\pa_t u_0 p_0{dxdt} + \frac{1}{2}\int\limits_{Q^T}\nabla u_0 \nabla p_0 {dxdt} + \nonumber \\
    + \frac{\mathcal{A}_n}{2}\int\limits_{Q^T}(u_0 - \mathcal{B}_nH(u_0))p_0 {dxdt}
    + \frac{1}{2N}\int\limits_{\omega^T}p^2_0{dxdt} = \nonumber \\
    = \frac{1}{2}\int\limits_{\Omega}u_0(x, T)(u_0 - u_T)(x, T){dx} - \frac{1}{2}\int\limits_{Q^T}\pa_t p_0 u_0 {dxdt} + \frac{1}{2}\int\limits_{Q^T}\nabla u_0 \nabla p_0{dxdt} + \nonumber \\
    + \frac{\mathcal{A}_n}{2}\int\limits_{Q^T}(p_0 - \mathcal{B}_nH^*(p_0))u_0 {dxdt}
    + \frac{1}{2N}\int\limits_{\omega^T}p^2_0{dxdt} = \nonumber \\
    = \frac{1}{2}\int\limits_{\Omega}u_0(x, T)(u_0 - u_T)(x, T){dx} + \frac{1}{2}\int\limits_{Q^T}\nabla u_0 \nabla(u_0 - u_T){dxdt} + \frac{1}{2N}\int\limits_{\omega^T}p^2_0{dxdt} + \nonumber \\ \label{cost limit 3}
    + \frac{\mathcal{A}_n}{2}\int\limits_{Q^T}(u_0 - \mathcal{B}_n^2H^*(H(u_0)))u_0{dxdt} - \frac{\mathcal{A}_n}{2}\int\limits_{Q^T}e^{\mathcal{B}_n (t - T)}u_T(x, T) u_0{dxdt}.
\end{gather}
Next, we substitute \eqref{cost limit 1}-\eqref{cost limit 3} into \eqref{cost limit} and derive
\begin{gather}
    \lim\limits_{\ee\to 0}J_\ee(v_\ee) = \frac{1}{2}\Vert\nabla (u_0 - u_T)\Vert^2_{L^2(Q^T)} + \frac{1}{2}\Vert (u_0 - u_T)(x, T)\Vert^2_{L^2(\Omega)} + \frac{1}{2N}\Vert p_0\Vert^2_{L^2(\omega^T)} + \nonumber \\
    + \frac{C^{n-1}_0\omega_n}{2}\Vert u_T(x, T)\Vert^2_{L^2(\Omega)} + \frac{\mathcal{A}_n}{2}\int\limits_{Q^T}(u^2_0 - \mathcal{B}_n^2 H^2(u_0)){dxdt} - \nonumber \\ \label{cost limit final decomp}
    - \frac{\mathcal{A}_n}{2}\int\limits_{Q^T}e^{\mathcal{B}_n (t - T)}u_T(x, T) u_0{dxdt} -\frac{\mathcal{A}_n}{2}\int\limits_{\Omega}H(u_0)(x, T) u_T(x, T){dx}.
\end{gather}
Using the definition of $H$ and $H^*$, we make the transform
\begin{gather*}
    \int\limits_{Q^T}e^{\mathcal{B}_n(t - T)}u_T(x, T) u_0 {dxdt} = \int\limits_{Q^T}(H^*(\pa_t u_T - \mathcal{B}_n u_T)(x, t) + u_T(x, t))u_0{dxdt} = \\
    = \int\limits_{Q^T}(\pa_t u_T - \mathcal{B}_n u_T)H(u_0){dxdt} + \int\limits_{Q^T}u_T u_0{dxdt} = \\
    = \int\limits_{Q^T}(\pa_t u_T H(u_0) + (u_0 - \mathcal{B}_n H(u_0))u_T){dxdt} = \int\limits_{Q^T}(\pa_t(u_T H(u_0))){dxdt} = \\
    = \int\limits_{\Omega}u_T(x, T) H(u_0)(x, T){dxdt}.
\end{gather*}
Substituting this into \eqref{cost limit final decomp}, we get
\begin{gather*}
    \lim\limits_{\ee\to 0}J_\ee(v_\ee) = \frac{1}{2}\Vert\nabla (u_0 - u_T)\Vert^2_{L^2(Q^T)} + \frac{1}{2}\Vert (u_0 - u_T)(x, T)\Vert^2_{L^2(\Omega)} + \frac{1}{2N}\Vert p_0\Vert^2_{L^2(\omega^T)} + \\
    + \frac{C^{n-1}_0\omega_n}{2}\Vert u_T(x, T)\Vert^2_{L^2(\Omega)} + \frac{\mathcal{A}_n}{2}\int\limits_{Q^T}(u^2_0 - \mathcal{B}_n^2 H^2(u_0)){dxdt} 
    - \frac{\mathcal{A}_n}{2}\int\limits_{\Omega}2u_T(x, T)H(u_0)(x, T){dx}.
\end{gather*}
Using the definition of $H(u_0)$, we derive
\begin{gather*}
    u^2_0 - \mathcal{B}_n^2 H^2(u_0) = (\pa_t H(u_0) + \mathcal{B}_n H(u_0))^2 - \mathcal{B}_n^2 H^2(u_0)=\\
    = (\pa_t H(u_0))^2 + 2\mathcal{B}_n \pa_t H(u_0) H(u_0).
\end{gather*}
 Hence, we have
\begin{equation*}
    \int\limits_{Q^T}(u^2_0 - \mathcal{B}_n^2 H^2(u_0)){dxdt} = \int\limits_{Q^T}(\pa_t H(u_0))^2{dxdt} + \mathcal{B}_n\int\limits_{\Omega}H^2(u_0)(x, T){dx}.
\end{equation*}
Thus, we further transform the limit of $J_\ee$ and get
\begin{gather*}
     \lim\limits_{\ee\to 0}J_\ee(v_\ee) = \frac{1}{2}\Vert\nabla (u_0 - u_T)\Vert^2_{L^2(Q^T)} + \frac{1}{2}\Vert (u_0 - u_T)(x, T)\Vert^2_{L^2(\Omega)} + \frac{1}{2N}\Vert p_0\Vert^2_{L^2(\omega^T)} + \\
     + \frac{C^{n-1}_0\omega_n}{2}\Vert(u_T - \mathcal{B}_n H(u_0))(x, T)\Vert^2_{L^2(\Omega)} + \frac{\mathcal{A}_n}{2}\Vert \pa_t H(u_0)\Vert^2_{L^2(Q^T)} \equiv J_0(v_0).
\end{gather*}
This concludes the proof.
\end{proof}

\begin{remark}
\label{Rm Controlability Limit problem} As indicated in Remark \ref{Rm
Controlability probl init}, the mentioned arguments by J.-L. Lions in \cite%
{Lions Malaga} (see also, e.g., Section 1.6 in the book \cite{Glow-Lions}),
allow to get some results \ on the \textquotedblleft approximate
controllability property\textquotedblright\ for solutions of problem (\ref%
{limit u prob}). Given $u_{T}\in H^{1}(0,T;H_{0}^{1}(\Omega )) \bigcap C(\overline{Q^T})$ and an
arbitrarily small $\delta >0,$ the \textquotedblleft approximate
controllability property\textquotedblright\ consists now in finding a
control $v_{\delta }\in L^{2}(\omega _{\varepsilon }^{T})$ such that $%
\left\Vert u_{\varepsilon }(\cdot,T)-u_{T}(\cdot,T)\right\Vert _{L^{2}(\Omega )}\leq
\delta $. We introduce a new parameter $\kappa >0$ in the cost functional%
\begin{equation}
\begin{gathered}
J_{0}^{\kappa }(v)=\frac{\kappa }{2}\Vert \nabla (u_{0}(v)-u_{T})\Vert
_{L^{2}(Q^{T})}^{2}+\frac{\kappa }{2}\Vert u_{0}(v)(\cdot,T)-u_{T}(\cdot,T)\Vert
_{L^{2}(\Omega )}^{2}+\frac{N}{2}\Vert v\Vert _{L^{2}(\omega ^{T})}^{2} + \\ 
+\frac{\kappa C^{n-1}_0\omega _{n}}{2}\Vert u_{T}(\cdot,T)-\mathcal{B}%
_{n}H(u_0(v))(\cdot,T)\Vert _{L^{2}(\Omega )}^{2}+\frac{\kappa \mathcal{A}_{n}}{2%
}\Vert \partial _{t}H(u_0(v))\Vert _{L^{2}(Q^{T})}^{2},%
\end{gathered}
\end{equation}%
and if we know a result on \textquotedblleft unique
continuation\textquotedblright\ for problem \eqref{limit u prob}, then, by
using some a priori estimates on the adjoint state $p_{0}(x,t)$, it can be
shown that such searched control $v_{\delta }$ can be found by considering
the set of optimal controls $v_{\kappa }\in L^{2}(\omega _{\varepsilon
}^{T}) $ associated to $J_{0}^{\kappa }(v)$ and by taking $v_{\delta
}=v_{\kappa }$ for $\kappa $ large enough. Again, the presence of the first,
fourth and fifth terms in $J_{0}^{\kappa }(v)$ leads to conclude that the
the associated state will also satisfy some additional properties: $\Vert \nabla
(u_{0} - u_{T})\Vert _{L^{2}(Q^{T})}\leq \delta ,$ $\Vert u_{T}(\cdot,T)-\mathcal{B%
}_{n}H(u_0)(\cdot,T)\Vert _{L^{2}(\Omega )}\leq \delta $ and $\Vert \partial
_{t}H(u_0)\Vert _{L^{2}(Q^{T})}^{2}\leq \delta .$
\end{remark}

\begin{center}
\textbf{Acknowledgments}
\end{center}

The research of J. I. D\'{\i}az was partially supported by the project
PID2020-112517 GB-I00 of the Spain State Research Agency (AEI).


\end{document}